

\documentclass[12pt,reqno]{amsart}

\usepackage[a4paper, centering, scale=0.75]{geometry}
\usepackage{thmtools, thm-restate}
\usepackage{enumerate}
\usepackage{slashed}
\usepackage{txfonts,dsfont}
\usepackage[breaklinks,colorlinks]{hyperref}
\usepackage{nameref}
\usepackage{cleveref}


\makeatletter
\newcommand{\autorefcheckize}[1]{%
  \expandafter\let\csname @@\string#1\endcsname#1%
  \expandafter\DeclareRobustCommand\csname relax\string#1\endcsname[1]{%
    \csname @@\string#1\endcsname{##1}\wrtusdrf{##1}}%
  \expandafter\let\expandafter#1\csname relax\string#1\endcsname
}
\makeatother

\declaretheorem[numberwithin=section]{theorem}
\declaretheorem[sibling=theorem, name=Lemma]{lem}
\declaretheorem[sibling=theorem, name=Corollary]{cor}

\declaretheorem[sibling=theorem, name=Remark]{rem}

\declaretheorem[numberwithin=section, name=Example]{eg}

\numberwithin{equation}{section}

\newcommand{\norm}[1]{\left\lVert#1\right\rVert}
\newcommand{\abs}[1]{\left\lvert#1\right\rvert}
\newcommand{\set}[1]{\left\{#1\right\}}

\newcommand*{\Rmn}[1]{\uppercase\expandafter{\romannumeral#1}}
\newcommand*{\To}{\longrightarrow}

\newcommand*{\dif}{\mathop{}\!\mathrm{d}}

\DeclareMathOperator{\esssup}{ess\, \sup}

\allowdisplaybreaks


\begin{document}

\title[Brouwer degree for Kazdan-Warner equations on a connected finite graph]{Brouwer degree for Kazdan-Warner equations on a connected finite graph}


\author[Linlin Sun]{Linlin Sun}
\address{School of Mathematics and Statistics, Wuhan University, Wuhan 430072, China}
\address{Hubei Key Laboratory of Computational Science, Wuhan University, Wuhan, 430072, China}
\email{sunll@whu.edu.cn}

\author[Liuquan Wang]{Liuquan Wang}
\address{School of Mathematics and Statistics, Wuhan University, Wuhan 430072, China}
\email{wanglq@whu.edu.cn; mathlqwang@163.com}

\thanks{This work is partially supported by the National Natural Science Foundation of China (Grant Nos. 11801420, 11801424, 11971358).}

\subjclass[2020]{35R02, 35A16}

\keywords{Kazdan-Warner equation, Brouwer degree, existence}

\date{\today}


\begin{abstract}

We study Kazdan-Warner equations on a connected finite graph via the method of the degree theory. Firstly, we prove that all solutions to the Kazdan-Warner equation with nonzero prescribed function are uniformly  bounded and the Brouwer degree is well defined. Secondly, we compute the Brouwer degree case by case. As consequences, we give new proofs of some known existence results for the Kazdan-Warner equation on a connected finite graph.

\end{abstract}

\maketitle

\section{Introduction}
Let $\Sigma$ be a closed Riemann surface, $h$ and $f$ two smooth functions on $\Sigma$. The Kazdan-Warner equation reads as
\begin{align}\label{eq:KW-surface}
    -\Delta u=he^{u}-f,
\end{align}
where $\Delta$ is the Laplace-Beltrami operator.
It comes from the prescribed Gaussian curvature problem  \cite{KazWar74curvature,ChaYan87prescribing,CheDin87scalar},
and also appears in various contexts such as the abelian Chern-Simons-Higgs models  \cite{RicTar00vortices,Nol03nontopological,CafYan95vortex}.

The existence of solutions to the Kazdan-Warner equation has been studied in recent decades.
Denote by $\dif\mu_{\Sigma}$ the area element of $\Sigma$. If $\int_{\Sigma}f\dif\mu_{\Sigma}=0$ and $h\not\equiv0$, then the Kazdan-Warner equation  \eqref{eq:KW-surface} is solvable \cite{KazWar74curvature} if and only if $h$ changes sign and
\begin{align*}
    \int_{\Sigma}he^{\phi}\dif\mu_{\Sigma}<0,
\end{align*}
where $\phi$ is the unique solution to
\begin{align*}
    -\Delta\phi=\dfrac{\int_{\Sigma}f\dif\mu_{\Sigma}}{\int_{\Sigma}1\dif\mu_{\Sigma}}-f,\quad\int_{\Sigma}\phi\dif\mu_{\Sigma}=0.
\end{align*}
If $\int_{\Sigma}f\dif\mu_{\Sigma}\neq0$, then the Kazdan-Warner equation \eqref{eq:KW-surface} can be reduced to the following mean field equation
\begin{align}\label{eq:mf}
    -\Delta u=\rho\left(\dfrac{he^{u}}{\int_{\Sigma}he^{u}\dif\mu_{\Sigma}}-\dfrac{1}{\int_{\Sigma}1\dif\mu_{\Sigma}}\right)
\end{align}
where $\rho\in\mathbb{R}\setminus\set{0}$. Many partial existence results of the mean field equation have been obtained for both noncritical and critical cases, see for examples Struwe and Tarantello \cite{StruTar98multivortex}, Ding, Jost, Li and Wang \cite{DinJosLiWan99existence}, Chen and Lin \cite{CheLin03topological}, Djadli \cite{Dja08existence} and the references therein.

If the prescribed function $h$ is a positive function and $\rho\in\mathbb{R}\setminus 8\pi\mathbb{N}^*$, then every solution $u$ with $\int_{\Sigma}u\dif\mu_{\Sigma}=0$  to the mean field equation \eqref{eq:mf} is uniformly bounded. One can define the Leray-Schauder degree for equation \eqref{eq:mf} as follows (cf. \cite[p. 422]{Li99harnack}). Given $\alpha\in(0,1)$, consider
\begin{align*}
    X_{\alpha}=\set{u\in C^{2,\alpha}\left(\Sigma\right):\int_{\Sigma}u\dif\mu_{\Sigma}=0},
\end{align*}
and introduce a compact operator $K_{\rho,h}:X_{\alpha}\To X_{\alpha}$ by
\begin{align*}
    K_{\rho,h}(u)=\rho\left(-\Delta\right)^{-1}\left(\dfrac{he^{u}}{\int_{\Sigma}he^{u}\dif\mu_{\Sigma}}-\dfrac{1}{\int_{\Sigma}1\dif\mu_{\Sigma}}\right).
\end{align*}
The Leray-Schahder degree is defined by
\begin{align*}
    d_{\rho}=\lim_{R\to+\infty}\deg\left(\mathrm{Id}-K_{\rho,h}, B_{R}^{X_{\alpha}},0\right)
\end{align*}
 which is independent of $\alpha$ and $h$. Here $B_{R}^{X_{\alpha}}$ stands for the ball with center at the origin and radius $R$ in the Banach space $X_{\alpha}$ equipped with the $C^{2,\alpha}$-norm. Li \cite[p. 422]{Li99harnack} pointed out that $d_{\rho}$ should be  determined by the Euler number $\chi\left(\Sigma\right)$ of $\Sigma$.
Chen and Lin \cite[Theorem 1.2]{CheLin03topological} proved that
\begin{align*}
    d_{\rho}=\binom{k-\chi\left(\Sigma\right)}{k},
\end{align*}
where  $\rho\in\left(8k\pi,8(k+1)\pi\right)$ and $k\in\mathbb{N}$. As a consequence, if the genus of $\Sigma$ is greater than zero, then the mean field equation \eqref{eq:mf} with positive prescribed function $h$ always possesses a solution provided that $\rho$ is not a multiple  of $8\pi$.

In this paper, we  consider the following Kazdan-Warner equation on a connected finite graph $G=(V,E)$:
 \begin{align}\label{eq:KW}
     -\Delta u(x)=h(x)e^{u(x)}-c,\quad x\in V,
 \end{align}
 where $\Delta$ is the Laplace operator on $G$ (see \eqref{mu-Laplace}), $h$ is a real function on $V$ and $c$ is a real number.
 This equation was studied by several mathematicians (cf. \cite{GriLinYan16kazdan,Ge17kazdan,LiuYan20multiple,KelSch18kazdan,HuaLinYau20existence,GeJia18Kazdan}). For example, utilizing the variational method, Grigor'yan, Lin and Yang \cite[Theorems 1-3]{GriLinYan16kazdan} obtained the following discrete analog of that of Kazdan and Warner \cite{KazWar74curvature}:
 \begin{itemize}
     \item when $c=0$, \eqref{eq:KW} has a solution if and only if $h\equiv0$ or $h$ changes sign and $\int_{V}h\dif\mu<0$;
     \item when $c>0$, \eqref{eq:KW} is solvable if and only if $\max_{V}h>0$;
     \item  when $c<0$, if \eqref{eq:KW} has a solution, then $\int_Vh\dif \mu<0$, and in this case, there exists a constant  $c_{h}\in[-\infty,0)$ depending on $h$ such that \eqref{eq:KW} has a solution if $c\in(c_h,0)$, but has no solution for any $c<c_h$.
 \end{itemize}
Grigor'yan, Lin and Yang \cite[Theorem 4]{GriLinYan16kazdan} pointed out that $c_h=-\infty$ if $\min_{V}h<\max_V h\leq0$. Ge \cite{Ge17kazdan} proved that $c_{h}<-\infty$ if $h$ changes sign and obtained a solution for $c=c_h$. Recently, Liu and Yang \cite{LiuYan20multiple} studied the following Kazdan-Warner equation
 \begin{align}\label{eq:K-lambda}
    -\Delta u=K_{\lambda}e^{u}-\kappa
\end{align}
where $\int_{V}\kappa\dif\mu<0$, $K_{\lambda}=K+\lambda$ and $\min_{V}K<\max_{V}K=0$. They obtained the following discrete analog of that of Ding and Liu \cite{DinLiu95note}: there exists a $\lambda^*\in(0,-\min_{V}K)$ such that \eqref{eq:K-lambda} has a unique solution if $\lambda\leq0$, at least two distinct solutions if $0<\lambda<\lambda^*$, at least one solution if $\lambda=\lambda^*$, and no solution if $\lambda>\lambda^*$.

 Our aim is to extend the results of Chen and Lin \cite{CheLin03topological} to graphs. We shall prove that every solution to the Kazdan-Warner equation \eqref{eq:KW} is uniformly bounded whenever $h\not\equiv0$. Consequently, the Brouwer degree $d_{h,c}$ for \eqref{eq:KW} can be well defined. We will give the exact formula for the Brouwer degree $d_{h,c}$. Meanwhile, we will use the degree theory to recover some known existence results.

The remaining part of this paper is briefly organized as follows. In Section \ref{sec:main} we recall some notions on a graph and state our main results.  In Section \ref{sec:preliminaries} we recall some basic facts regarding functions on a connected finite  graph. In Section \ref{sec:blowup} we study the blow-up behavior for the Kazdan-Warner equation and state a discrete analog of that of Brezis and Merle's result in \cite{BreMer91uniform}. We will prove a compactness result for the Kazdan-Warner equation on a connected finite graph. In particular, we give a proof of \autoref{main:a-priori}. In Section \ref{sec:degree} we compute the Brouwer degree for the  Kazdan-Warner equation case by case (\autoref{main:degree}). In Section  \ref{sec:existence} we shall give new proofs of several known existence results by using the degree theory (\autoref{main:cor1} and \autoref{main:cor2}).  Hereafter we do not distinguish sequence and subsequence unless necessary. Moreover, we use the capital letter $C$ to denote some uniform constants which are independent of the special solutions and not necessarily the same at each appearance.

 \section{Settings and Main results}\label{sec:main}
Throughout this paper, $G=(V,E)$ is assumed to be a  connected finite graph with vertex set $V$ and  edge set $E$. The edges on the graph are allowed to be weighted. Let $\omega:V\times V\To\mathbb{R}$ be a weight function in the sense that  $\omega_{xy}=\omega_{yx}\geq0$ and
\begin{align*}
\omega_{xy}>0\ \Longleftrightarrow\  xy\in E.
\end{align*}
$G$ is connected means that for every $x,y\in V$ there exist $x_i\in V$ such that $x=x_1, y=x_m$ and
\begin{align*}
\omega_{x_ix_{i+1}}>0,\quad i=1,\dotsc,m-1.
\end{align*}
We say that $G$ is finite if the number of vertices is finite. Denote by $V^{\mathbb{R}}$ the set of real functions on $V$.  Let $\mu$ be a positive function (vertex measure) on $V$ and define the ($\mu$-)Laplace operator $\Delta$ by
\begin{align}\label{mu-Laplace}
\Delta u(x)\coloneqq\dfrac{1}{\mu_x}\sum_{y\in V}\omega_{xy}\left(u(y)-u(x)\right),\quad x\in V, \quad u\in V^{\mathbb{R}}.
\end{align}
For any function $f\in V^{\mathbb{R}}$, an integral of $f$ over $V$ is defined by
\begin{align*}
\int_Vf\dif\mu\coloneqq\sum_{x\in V}f(x)\mu_x.
\end{align*}
We have the following Green formula:
\begin{align}\label{Green}
\int_{V}\Delta u v\dif\mu=-\int_{V}\Gamma\left(u,v\right)\dif\mu,
\end{align}
where $\Gamma$ is the  associated gradient form:
\begin{align}\label{gradient-eq}
\Gamma(u,v)(x)\coloneqq\dfrac{1}{2\mu_x}\sum_{y\in V}\omega_{xy}\left(u(y)-u(x)\right)\left(v(y)-v(x)\right).
\end{align}
Let
\begin{align*}
\abs{\nabla u}(x)\coloneqq\sqrt{\Gamma(u,u)(x)}.
\end{align*}
Denote by $L^P\left(V\right)$ the space of all functions $f\in V^{\mathbb{R}}$ with finite norm  $\norm{f}_{L^p\left(V\right)}$ which is defined by
\begin{align*}
\norm{f}_{L^p(V)}\coloneqq\begin{cases}
\left(\int_{V}\abs{f}^p\dif\mu\right)^{1/p},&1\leq p<\infty,\\
\esssup_{V}\abs{f},&p=\infty.
\end{cases}
\end{align*}
We also consider the Sobolev space $W^{1,p}\left(V\right)$ which consist of all functions $f\in V^{\mathbb{R}}$ with finite norm $\norm{f}_{W^{1,p}(V)}$ which is defined by
\begin{align*}
    \norm{f}_{W^{1,p}(V)}\coloneqq \norm{f}_{L^p\left(V\right)}+\norm{\abs{\nabla f}}_{L^p\left(V\right)}.
\end{align*}

For every $h, f\in V^{\mathbb{R}}$, we consider the following functional
\begin{align*}
J_{h,f}(u)=\int_{V}\left(\dfrac12\abs{\nabla u}^2+fu-he^{u}\right)\dif\mu,\quad u\in W^{1,2}\left(V\right).
\end{align*}
The critical points of $J_{h,f}$ are exactly the solutions to the following (generalized) Kazdan-Warner equation:
\begin{align}\label{eq:KW-hf}
    -\Delta u(x)=h(x)e^{u(x)}-f(x),\quad x\in V.
\end{align}
We say that $u$ is stable if
\begin{align*}
\int_{V}\left(\abs{\nabla\xi}^2-he^{u}\xi^2\right)\dif\mu\geq0,\quad\forall \xi\in V^{\mathbb{R}},
\end{align*}
and $u$ is strictly stable if the equality holds only if $\xi\equiv0$.

 The first main theorem is the following a priori estimate.
\begin{theorem}\label{main:a-priori}
Let $G=(V,E)$ be a connected finite graph with weight $\omega$ and measure $\mu$. Assume that $h\in V^{\mathbb{R}}$ and $c\in\mathbb{R}$ satisfy:
  \begin{enumerate}[$1)$]
      \item if $c$ is positive, then $h$ is positive somewhere;
      \item if $c$ equals to zero, then $h$ changes sign and the integral of $h$ over $V$ is negative;
      \item if $c$ is negative, then $h$ is negative somewhere.
  \end{enumerate}
  Then there exists a constant $C$ depending only on $h,c,G,\omega$ and $\mu$ such that every solution $u$ to the Kazdan-Warner equation \eqref{eq:KW} satisfies
\begin{align*}
    \max_{V}\abs{u(x)}\leq C.
\end{align*}
\end{theorem}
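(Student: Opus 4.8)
The plan is to argue by contradiction. If no such $C$ existed, there would be a sequence of solutions $u_n$ of \eqref{eq:KW} with $\max_V\abs{u_n}\to\infty$. Since $V$ is finite, after passing to a subsequence I may assume that for every $x\in V$ both $u_n(x)$ and $M_n-u_n(x)$ converge in $[-\infty,+\infty]$, where $M_n\coloneqq\max_Vu_n$ and $m_n\coloneqq\min_Vu_n$, and that the vertices $x_{\max},x_{\min}$ realizing the maximum and minimum do not depend on $n$. Two elementary observations drive the argument. First, testing \eqref{eq:KW} against $v\equiv1$ in the Green formula \eqref{Green} gives the conservation identity $\int_Vhe^{u_n}\dif\mu=c\int_V1\dif\mu$ for every $n$. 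Second, with $D_x\coloneqq\sum_{y\in V}\omega_{xy}$ one has the crude pointwise bound $\abs{\Delta u_n(x)}\le(D_x/\mu_x)(M_n-m_n)$, while $\Delta u_n(x_{\max})\le0$ and $\Delta u_n(x_{\min})\ge0$; hence the discrete maximum principle applied to \eqref{eq:KW} yields $h(x_{\max})e^{M_n}\ge c$ and $h(x_{\min})e^{m_n}\le c$.

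I would first establish the lower bound $m_n\ge-C$. When $c<0$ this is immediate, since $h(x_{\min})e^{m_n}\le c<0$ forces $e^{m_n}\ge\abs{c}/\max_V\abs{h}$. When $c\ge0$ I would suppose $m_n\to-\infty$ and seek a contradiction: at the minimum vertex $-\Delta u_n(x_{\min})=h(x_{\min})e^{m_n}-c\to-c$, and since every neighbour exceeds $m_n$, convergence of the nonnegative sum $\sum_y\omega_{x_{\min}y}(u_n(y)-m_n)$ to a finite limit forces $u_n(y)-m_n$ to stay bounded at each neighbour. Propagating this along the edges of the connected graph, using the equation at each newly reached vertex, yields that $u_n(x)-m_n$ is bounded for every $x\in V$, and tends to $0$ when $c=0$. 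The conservation identity then shows $\int_Vhe^{u_n}\dif\mu$ to be of order $e^{m_n}\to0$, which contradicts $\int_Vhe^{u_n}\dif\mu=c\int_V1\dif\mu>0$ when $c>0$; when $c=0$ the identity refines to $e^{m_n}\bigl(\int_Vh\dif\mu+o(1)\bigr)=0$, forcing $\int_Vh\dif\mu=0$ and contradicting hypothesis $2)$.

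With $m_n\ge-C$ in hand I would bound $u_n$ from above. At $x_{\max}$ the maximum principle gives $h(x_{\max})e^{M_n}\ge c$. If $c>0$ this forces $h(x_{\max})>0$; and whenever $h(x_{\max})>0$, combining $h(x_{\max})e^{M_n}-c=-\Delta u_n(x_{\max})$ with $\abs{\Delta u_n(x_{\max})}\le(D_{x_{\max}}/\mu_{x_{\max}})(M_n+C)$ pits an exponential against a linear function of $M_n$, so $M_n$ must stay bounded. If $c<0$ and $h(x_{\max})<0$, then $h(x_{\max})e^{M_n}\ge c$ already gives $e^{M_n}\le\abs{c}/\abs{h(x_{\max})}$. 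The only case left open, which can occur only when $c\le0$, is $h(x_{\max})=0$; this is the crux of the proof, since there the maximum principle gives no direct control on $M_n$.

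To treat it I would introduce the top plateau $B\coloneqq\set{x\in V:M_n-u_n(x)\text{ stays bounded}}$, which contains $x_{\max}$ and on which $u_n(x)=M_n-O(1)\to+\infty$. For $x\in B$ the bound $\abs{h(x)e^{u_n(x)}-c}=\abs{\Delta u_n(x)}\le(D_x/\mu_x)(M_n+C)$ together with $e^{u_n(x)}\ge e^{M_n-C_0}$ is, once more, exponential against linear, so necessarily $h(x)=0$; that is, $h\equiv0$ on $B$. Since $h\not\equiv0$ under each of the three hypotheses, $B\neq V$, and by connectedness there is an edge from some $x\in B$ to some $z\notin B$. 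At this $x$ we have $h(x)=0$, so $-\Delta u_n(x)=-c$ is bounded; but in $-\Delta u_n(x)=\mu_x^{-1}\sum_y\omega_{xy}(u_n(x)-u_n(y))$ every summand is bounded below (because $x\in B$) while the summand $\omega_{xz}(u_n(x)-u_n(z))$ tends to $+\infty$ (because $z\notin B$), forcing $-\Delta u_n(x)\to+\infty$ — a contradiction. Hence $M_n$ is bounded above as well, and all three cases are complete. The genuinely delicate step is this plateau analysis on the set $\set{h=0}$, where one cannot exploit the sign of $h$ and must instead combine the conservation identity with the connectivity of the graph.
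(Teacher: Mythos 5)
Your proof is correct, and it reaches the conclusion by a genuinely different route than the paper. The paper proceeds in two stages: first a Brezis--Merle type alternative (\autoref{thm:alternative}), proved with Kato's inequality (\autoref{lem:kato}) and the elliptic estimate (\autoref{lem:elliptic}), and then a compactness theorem (\autoref{thm:a-priori}) stated for sequences $h_n\to h$, $c_n\to c$ obeying quantitative versions of hypotheses $1)$--$3)$, whose proof combines the alternative with integral identities: in the blow-up case one derives $\norm{\Delta u_n}_{L^1\left(V\right)}\leq C$, concludes via \autoref{lem:elliptic} that blow-up must be \emph{uniform}, hence $h\leq0$ on all of $V$, and then the identity $\int_{V}c_n\dif\mu=\int_{V}h_ne^{u_n}\dif\mu\leq -CA^{-1}e^{\min_{V}u_n}$ yields the contradiction. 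You avoid both lemmas: your edge-propagation of bounds outward from the minimum vertex is a hands-on substitute for the elliptic estimate, and your plateau analysis replaces the ``blow-up is uniform'' step --- rather than showing $h\leq0$ everywhere and invoking the integral identity, you show $h\equiv0$ on the plateau $B$ and obtain a contradiction purely locally, from the equation at a vertex of $B$ adjacent to $V\setminus B$, using connectivity. (One subcase is left implicit: for $c=0$ and $h(x_{\max})<0$, the inequality $h(x_{\max})e^{M_n}\geq c=0$ you already derived is violated, so that case is vacuous.) The trade-off is this: the paper's formulation is uniform in the data $(h_n,c_n)$, which is exactly what gets reused for the homotopy-invariance arguments in the degree computations of Section \ref{sec:degree}, whereas your argument as written is tied to fixed $(h,c)$ --- the plateau step uses $h(x)=0$ exactly --- and would need the quantitative hypotheses of \autoref{thm:a-priori} plus some extra care to serve that purpose; in exchange, your proof is more elementary and self-contained, resting only on the maximum principle at extremum vertices, the conservation identity, and the finiteness and connectivity of the graph.
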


\begin{rem}
The conditions $1)-3)$ mentioned in \autoref{main:a-priori} are necessary conditions to solve the Kazdan-Warner equation \eqref{eq:KW}.
\end{rem}

Assume that $h,f\in V^{\mathbb{R}}$ satisfy:
\begin{enumerate}[1)']
    \item if $\int_{V}f\dif\mu>0$, then $\max_{V}h>0$;
    \item if $\int_{V}f\dif\mu=0$, then $\int_{V}he^{\phi}\dif\mu<0<\max_{V}h$;
    \item if $\int_{V}f\dif\mu<0$, then $\min_{V}h<0$,
\end{enumerate}
where $\phi$ is the unique solution to
\begin{align*}
    -\Delta\phi=\dfrac{\int_{V}f\dif\mu}{\int_{V}1\dif\mu}-f,\quad\min_{V}\phi=0.
\end{align*}
Consider a map
\begin{align*}
F_{h,f}: L^{\infty}\left(V\right)\To L^{\infty}\left(V\right),\quad u\mapsto F_{h,f}(u)\coloneqq -\Delta u+f-he^{u}.
\end{align*}
We denote by $B_R$ the ball with center at the origin and radius $R$ in $L^{\infty}\left(V\right)$. Notice that if $u$ solves \eqref{eq:KW-hf} then
\begin{align*}
    -\Delta\left(u-\phi\right)=he^{\phi}e^{u-\phi}-\dfrac{\int_{V}f\dif\mu}{\int_{V}1\dif\mu}.
\end{align*}
Applying \autoref{main:a-priori} we conclude that there is no solution on the boundary $\partial B_{R}$ for $R$ large. Hence, the Brouwer degree
\begin{align*}
\deg\left(F_{h,f}, B_{R}, 0\right)
\end{align*}
is well defined for $R$ large. According to the homotopic invariance, $\deg\left(F_{h,f}, B_{R}, 0\right)$ is independent of $R$. Let
\begin{align*}
d_{h,f}\coloneqq\lim_{R\to+\infty}\deg\left(F_{h,f}, B_{R}, 0\right).
\end{align*}
If $J_{h,f}$ is a Morse function, i.e., every critical point of $J_{h,f}$ is nondegenerate, then
\begin{align*}
    \deg\left(F_{h,f}, B_{R}, 0\right)=\sum_{u\in B_{R}, F_{h,f}(u)=0}\det\left(D F_{h,f}(u)\right)
\end{align*}
whenever $\partial B_{R}\cap F_{h,f}^{-1}\left(\set{0}\right)=\emptyset$.
For more details about the Brouwer degree and its various properties we refer the reader to Chang \cite[Chapter 3]{Cha05methods}.

The second main theorem is the following
\begin{theorem}\label{main:degree}Let $G=(V,E)$ be a  connected finite graph and $h, c$ as in \autoref{main:a-priori}. Then
  \begin{align*}
      d_{h,c}=\begin{cases}
      -1,&c\geq0;\\
      1,&c<0\ \text{and}\  \max_{V}h\leq0;\\
      0,&c<0\ \text{and}\ \max_{V}h>0.
      \end{cases}
  \end{align*}
\end{theorem}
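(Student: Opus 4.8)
The plan is to evaluate $d_{h,c}$ by homotopy invariance of the Brouwer degree, reducing each case to a model on which the solution set is explicit. The admissible pairs split into the four open regions of the statement, $\set{c>0,\ \max_Vh>0}$, $\set{c=0,\ \int_Vh\dif\mu<0<\max_Vh}$, $\set{c<0,\ \max_Vh\le0}$ and $\set{c<0,\ \max_Vh>0}$, and on each of them the degree is locally constant. Indeed, by \autoref{main:a-priori} (made uniform over compact families of admissible parameters by the compactness analysis of Section \ref{sec:blowup}) the solutions stay in a fixed ball $B_R$ under admissible deformations, so $\deg(F_{h,c},B_R,0)$ is unchanged along them; the four regions are separated precisely by the loci $\set{c=0,\ \int_Vh\dif\mu\ge0}$ and $\set{c<0,\ \max_Vh=0}$, across which blow-up occurs and the degree is allowed to jump. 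Since each region is path-connected, it suffices to compute the degree at one representative of each, and since a fixed $h$ with $\int_Vh\dif\mu<0<\max_Vh$ stays admissible along $c\in[0,c_0]$, the value on $\set{c=0}$ will be inherited from $\set{c>0}$.

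The negative/convex situation is the cleanest. When $c<0$ and $\max_Vh\le0$ (so $h\le0$, $h\not\equiv0$), the functional $J_{h,c}(u)=\int_V\left(\frac12\abs{\nabla u}^2+cu+\abs{h}e^{u}\right)\dif\mu$ is a sum of convex terms, and it is coercive: writing $u=\bar u+v$ with $\int_Vv\dif\mu=0$, the Dirichlet term controls $v$ while the linear term $c\bar u\int_V1\dif\mu$ and the exponential term $e^{\bar u}\int_V\abs{h}e^{v}\dif\mu$ drive $J\to+\infty$ as $\abs{\bar u}\to\infty$. Hence $J_{h,c}$ has a unique critical point $u^\ast$, and $\hin{DF_{h,c}(u^\ast)\xi}{\xi}=\int_V\left(\Gamma(\xi,\xi)+\abs{h}e^{u^\ast}\xi^2\right)\dif\mu>0$ for $\xi\not\equiv0$; so $DF_{h,c}(u^\ast)$ is positive definite, $\det DF_{h,c}(u^\ast)>0$, and $d_{h,c}=1$ directly. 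For $c=0$ I will not recompute: fixing any $h$ with $\int_Vh\dif\mu<0<\max_Vh$ and deforming $c$ from a small positive value down to $0$ keeps the pair admissible, so $d_{h,0}$ equals the value on $\set{c>0}$ below, which path-connectedness then propagates to all admissible $(h,0)$.

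The case $c>0$ is the crux, and the obstacle is uniqueness at the model. I deform within $\set{c>0,\ \max_Vh>0}$ to $h\equiv h_0>0$ and $c\equiv c_0>0$ small. Splitting $u=\bar u+v$ with $\int_Vv\dif\mu=0$, the constant part of the equation fixes $e^{\bar u}=c_0\int_V1\dif\mu/(h_0\int_Ve^{v}\dif\mu)$, and the mean-zero part is the Euler--Lagrange equation of $I(v)=\frac12\int_V\Gamma(v,v)\dif\mu-c_0\int_V1\dif\mu\cdot\log\left(\int_Ve^{v}\dif\mu/\int_V1\dif\mu\right)$ on mean-zero functions, for which $\hin{I''(v)\xi}{\xi}=\int_V\Gamma(\xi,\xi)\dif\mu-c_0\int_V1\dif\mu\cdot\operatorname{Var}_{\nu}(\xi)$, where $\nu$ is the probability measure proportional to $e^{v}\dif\mu$. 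Combining Popoviciu's inequality $\operatorname{Var}_\nu(\xi)\le\frac14(\max_V\xi-\min_V\xi)^2$ with the discrete Poincaré bound $(\max_V\xi-\min_V\xi)^2\le C_G\int_V\Gamma(\xi,\xi)\dif\mu$ shows $I$ is globally convex once $c_0$ is small; since $I''(0)$ is positive definite whenever $c_0<\lambda_1$ (the smallest positive eigenvalue of $-\Delta$), the only critical point is $v\equiv0$, i.e. the unique solution is the constant $u_0=\log(c_0/h_0)$. Finally $DF_{h_0,c_0}(u_0)=-\Delta-c_0$ has eigenvalues $\lambda_i-c_0$, of which exactly one (from the constant eigenfunction) is negative when $0<c_0<\lambda_1$, so $\det DF_{h_0,c_0}(u_0)<0$ and $d_{h,c}=-1$ for all $c>0$, hence also for $c=0$. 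The hard part is exactly this global uniqueness: the naive identity $\int_V\Gamma(v,v)\dif\mu=c_0\int_V(e^{v}-1)v\dif\mu$ has both sides nonnegative and yields nothing, and uniqueness genuinely fails once $c_0$ exceeds $\lambda_1$ (already on the two-point graph), so the spectral-gap/convexity refinement is essential.

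It remains to treat $c<0$ with $\max_Vh>0$, where $h$ changes sign and the answer is $0$. I keep $(h,c)$ and use the translation homotopy $H_t(u)=F_{h,c}(u)-t$, $t\in[0,T]$: a zero of $H_t$ solves $-\Delta u=he^{u}-(c-t)$, the same equation with the constant lowered to $c-t$. Because $h$ changes sign there is a finite threshold $c_h>-\infty$ below which \eqref{eq:KW} has no solution (the non-existence input, see Ge \cite{Ge17kazdan}); choosing $T$ with $c-T<c_h$ leaves $H_T$ without zeros, so $\deg(H_T,B_R,0)=0$. For $c-t\in[c_h,c]$ the solutions stay in a fixed ball by the uniform a priori bound, so for $R$ large the constant segment $\set{t:0\le t\le T}$ meets no point of $F_{h,c}(\partial B_R)$ and the homotopy is admissible; therefore $d_{h,c}=\deg(H_0,B_R,0)=\deg(H_T,B_R,0)=0$. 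This is the only case relying on an external non-existence statement, which is the secondary difficulty.
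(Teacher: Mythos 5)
Your overall architecture (homotope inside the admissible region to explicit models, read the degree off the sign of $\det DF$ at a nondegenerate model solution, and get degree zero from nonexistence) is the same as the paper's, and your model computations are correct: the strict-convexity/coercivity argument for $c<0$, $\max_Vh\le0$ and the spectral-gap uniqueness proof at the constant model $-\Delta u=h_0e^{u}-c_0$ are in fact cleaner than the paper's counterparts, and your translation homotopy $c\mapsto c-t$ past the threshold $c_h$ is a legitimate alternative to the paper's $h_\Lambda$ deformation (though it imports Ge's nonexistence theorem, where the paper stays self-contained by deforming $h$ to $h_\Lambda$ with $\int_Vh_\Lambda\dif\mu>0$ and using the elementary necessary condition $\int_Vh\dif\mu<0$).

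The genuine gap is the backbone claim that \autoref{main:a-priori} can be ``made uniform over compact families of admissible parameters by the compactness analysis of Section \ref{sec:blowup}.'' The uniform estimate the paper actually proves, \autoref{thm:a-priori}, carries the hypothesis $(2)$: the \emph{positive} values of $h$ must be bounded below by $A^{-1}$ with a single constant $A$ for the whole family. Your homotopy for $c>0$ interpolates a sign-changing $h$ directly to a positive constant $h_0$; at every vertex where $h\le0$, the interpolant $h_t$ crosses zero and takes arbitrarily small positive values, so no single $A$ works along the path and \autoref{thm:a-priori} does not apply to it. This hypothesis is not cosmetic: the paper's example $u=-\ln\varepsilon$, which solves $-\Delta u=\varepsilon e^{u}-1$, is a family of individually admissible pairs $(h,c)=(\varepsilon,1)$ whose solutions run off to $+\infty$, so individual admissibility of each member of a family never yields uniform bounds by itself. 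The same problem recurs when you ``inherit'' the value at $c=0$ from $c>0$: hypothesis $(3)$ ($c\ge A^{-1}$) fails uniformly as $c\searrow0$, and the bound you need there is exactly the separate a priori estimate \eqref{eq:apriori-u_t} that the paper proves inside \autoref{thm:flat}, using $\int_Vh\dif\mu<0$ to exclude $u_t\to-\infty$. In short, the uniform-boundedness meta-claim you lean on is precisely the crux of the theorem, and you have not proved it.

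What saves your particular paths is that their limit points remain admissible, but exploiting this requires an argument absent from both your proposal and the paper's toolbox; for instance: if solutions $u_n$ (for parameters converging to an admissible limit with $\max_Vh^*>0$) blow up on a nonempty set $S$ and stay bounded on $B=V\setminus S\supseteq\set{h^*>0}$, then integrating the equation over $S$ gives
\begin{align*}
\int_{S}h_ne^{u_n}\dif\mu=c_n\mu(S)+\int_{S}\left(-\Delta u_n\right)\dif\mu\To+\infty,
\end{align*}
because connectivity produces at least one edge between $S$ and $B$ along which $u_n(x)-u_n(y)\to+\infty$; yet $\int_{V}h_ne^{u_n}\dif\mu=c_n\mu(V)$ and the boundedness of $\int_{B}h_ne^{u_n}\dif\mu$ force $\int_{S}h_ne^{u_n}\dif\mu$ to stay bounded, a contradiction, while $S=V$ contradicts boundedness above on $\set{h^*>0}\neq\emptyset$. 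The paper instead \emph{avoids} ever needing such a statement by rerouting the homotopies so that hypothesis $(2)$ is preserved: first $h\mapsto h^{+}-(1-t)h^{-}$ (the positive part, hence condition $(2)$, is untouched), then the Schur-complement reduction to a smaller connected graph on which $h$ is strictly positive everywhere, and only then deformation to a constant. Either supply a family-compactness lemma of the above type, or reroute your homotopies as the paper does; as written, the proof does not go through.
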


The Kronecker existence implies that there exists at least one solution if the Brouwer degree is nonzero.
As applications, we obtain several existence results mentioned in the introduction.

\begin{cor}[cf. \cite{GriLinYan16kazdan,Ge17kazdan}]\label{main:cor1}Let $G=(V,E)$ be a connected finite graph and $h\not\equiv0$.
\begin{enumerate}[$(1)$]
\item If $c>0$, then \eqref{eq:KW} is solvable if and only if $\max_{V}h>0$.
\item If $c=0$, then \eqref{eq:KW} is solvable if and only if $h$ changes sign and $\int_{V}h\dif\mu<0$.
\item If $c<0$ and $h\leq0$, then \eqref{eq:KW} has  a unique (strict global minimum) solution.
    \item If $c<0$ and $\int_{V}h\dif\mu<0<\max_{V}h$, then there exists a constant $c_h\in\left(-\infty,0\right)$ such that \eqref{eq:KW} has at least two distinct solutions for $c_h<c<0$, at least a (stable) solution for $c=c_{h}$, and no solution for $c<c_{h}$.
    \end{enumerate}
\end{cor}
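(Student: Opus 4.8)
The plan is to treat the four cases separately, combining the degree computation in \autoref{main:degree} and the Kronecker existence principle for the sufficiency directions with elementary integral identities for the necessity directions. For $(1)$ and $(2)$, integrating \eqref{eq:KW} against the constant $1$ and using the Green formula \eqref{Green} gives $\int_V he^{u}\dif\mu=c\int_V 1\dif\mu$. When $c>0$ this forces $he^{u}>0$ at some vertex, hence $\max_V h>0$; when $c=0$ it gives $\int_V he^{u}\dif\mu=0$, so $h$ must change sign. To see that $c=0$ also forces $\int_V h\dif\mu<0$, I would test \eqref{eq:KW} against $e^{-u}$: by \eqref{Green} the left-hand side equals $\int_V\Gamma(u,e^{-u})\dif\mu$, and since $t\mapsto e^{-t}$ is decreasing every summand in \eqref{gradient-eq} is nonpositive, whence $\int_V h\dif\mu\le0$, with equality only if $u$ is constant, which is impossible for $h\not\equiv0$. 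For the converse directions the hypotheses are exactly conditions $1)$ and $2)$ of \autoref{main:a-priori}, so $d_{h,c}$ is defined and equals $-1$ by \autoref{main:degree}; being nonzero, Kronecker existence yields a solution.

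For $(3)$, with $c<0$ and $0\not\equiv h\le0$, I would argue directly by convexity rather than only through the degree. Writing $-h=\abs{h}\ge0$, the functional $J_{h,c}$ has second variation $\int_V(\abs{\nabla\xi}^2+\abs{h}e^{u}\xi^2)\dif\mu$, which vanishes only when $\xi$ is constant and $\xi\equiv0$ on $\set{h\neq0}$, i.e. $\xi\equiv0$; thus $J_{h,c}$ is strictly convex. Decomposing $u$ into its mean $a$ and oscillation $w$ and using the exponential growth in $a$ of $e^{a}\int_V\abs{h}e^{w}\dif\mu$ against the linear term $ca\int_V1\dif\mu$ with $c<0$, together with the Poincar\'e inequality for the gradient term, one checks $J_{h,c}$ is coercive on $W^{1,2}(V)$. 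Hence $J_{h,c}$ has a unique critical point, which is its strict global minimum and the unique solution of \eqref{eq:KW}; this is consistent with $d_{h,c}=1$.

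The substantial case is $(4)$, where $c<0$ and $\int_V h\dif\mu<0<\max_V h$, so $d_{h,c}=0$ and the degree alone produces nothing. Let $\mathcal S$ be the set of $c<0$ for which \eqref{eq:KW} is solvable and $c_h=\inf\mathcal S$. The plan is first a monotonicity step showing $\mathcal S$ is an interval $(c_h,0)$ or $[c_h,0)$: if $c_1\in\mathcal S$ with solution $u_{c_1}$ and $c_1<c<0$, then $u_{c_1}$ is a supersolution and a sufficiently negative constant a subsolution for the parameter $c$, so monotone iteration on the finite graph yields a solution, in fact a minimal one $\underline u_c$. Next, $\underline u_c$ is stable and, for $c$ strictly inside $\mathcal S$, strictly stable, so the linearization $L_c=-\Delta-he^{\underline u_c}$ is positive definite and $\underline u_c$ is a nondegenerate local minimum with local degree $+1$; since the total degree over a large ball is $0$ by \autoref{main:degree}, excision forces a second solution away from $\underline u_c$, giving at least two solutions for each $c\in(c_h,0)$. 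Nonemptiness ($c_h<0$) follows from solvability at $c=0$ (case $(2)$) together with a sub-supersolution argument for $c$ slightly below $0$, and the endpoint $c=c_h$ is reached by passing to the limit along $c_k\downarrow c_h$, using \autoref{main:a-priori} uniformly on a compact $c$-interval to obtain a limiting stable solution; for $c<c_h$ non-solvability is immediate from the interval structure.

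I expect the main obstacle to be proving $c_h>-\infty$, i.e. that no solution exists for $c$ very negative, since this is precisely where $\max_V h>0$ is indispensable. The mechanism I would use: the discrete maximum principle at a minimum vertex of a solution gives $\min_V u\ge\log\left(\abs{c}/\max_V\abs{h}\right)$, so $\min_V u\to+\infty$ as $c\to-\infty$; normalizing $\tilde u=u-\log\abs{c}$ turns \eqref{eq:KW} into $-\Delta\tilde u=\abs{c}\left(he^{\tilde u}+1\right)$, and ruling out blow-up of the family $\set{\tilde u}$ by the compactness analysis of Section \ref{sec:blowup} forces $he^{\tilde u}+1\to0$ pointwise, i.e. $h<0$ everywhere, contradicting $\max_V h>0$. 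Making this no-blow-up step rigorous, and securing the uniform a priori bound needed at the endpoint $c_h$, are the delicate points of the argument.
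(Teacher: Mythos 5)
Your parts (1) and (2) coincide with the paper's argument (integral identities for necessity, $d_{h,c}=-1$ from \autoref{main:degree} plus Kronecker existence for sufficiency), and your part (3) is a legitimate alternative: where the paper gets existence from $d_{h,c}=1$, uniqueness from the strong maximum principle, and strict global minimality from the degree, you get all three at once from strict convexity (the Hessian $\int_V\left(\abs{\nabla\xi}^2+\abs{h}e^{u}\xi^2\right)\dif\mu$ is positive definite because $G$ is connected and $h\not\equiv0$) together with coercivity; this is more elementary and self-contained. In part (4) your multiplicity mechanism (one strictly stable solution of local index $+1$ against total degree $0$) is exactly the paper's. Two caveats there: you assert, but do not prove, that the minimal solution is strictly stable for $c$ interior to the solvable set — the paper proves strict local minimality of its order-interval minimizer via a fourth-order Taylor expansion of $J_{h,c}(u+t\xi)$, and you need that argument (or an equivalent) before the index computation is legitimate; also the uniform-in-$c$ bound used at the endpoint $c=c_h$ is \autoref{thm:a-priori}, not \autoref{main:a-priori}.

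The first genuine gap is your nonemptiness step ($c_h<0$), which points in the wrong direction. With the paper's sign conventions, a solution $u_0$ of $-\Delta u_0=he^{u_0}$ satisfies $\Delta u_0+he^{u_0}-c=-c>0$ for $c<0$, so $u_0$ is a \emph{sub}solution, while by \autoref{lem:add-negative} what produces a solution for $c<0$ is a \emph{super}solution. Sub/supersolution monotonicity propagates solvability upward in $c$ (a solution at $c_1$ is a supersolution for every $c\in(c_1,0)$), never downward from $c=0$. So solvability at $c=0$ gives nothing for $c<0$; one needs the explicit construction of \cite{GriLinYan16kazdan} used in the paper: solve $-\Delta v=h-\frac{\int_Vh\dif\mu}{\int_V1\dif\mu}$ and check that $\bar u=av+\ln a$ is a supersolution when $a$ and $\abs{c}$ are small, which works precisely because $\int_Vh\dif\mu<0$.

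The second genuine gap is your proof that $c_h>-\infty$. Your rescaling $\tilde u=u-\ln\abs{c}$ gives $-\Delta\tilde u=\abs{c}\left(he^{\tilde u}+1\right)$, whose coefficients $\abs{c}h$ and $-\abs{c}$ diverge as $c\to-\infty$; \autoref{thm:alternative} assumes $h_n\to h$ and $c_n\to c$ finite, so the compactness analysis of Section 4 simply does not apply to this family, and your conclusion $he^{\tilde u}+1\to0$ (hence $h<0$ everywhere) is unsupported. Indeed, one can check that $\tilde u$ is bounded below and bounded above on $\set{x\in V:h(x)\neq0}$, but it can a priori blow up (at rate $\abs{c}$) at vertices where $h$ vanishes — exactly the scenario of alternative $(3)$ in \autoref{thm:alternative} — and nothing in the paper rules this out for the rescaled equation. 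The paper closes this step with Ge's comparison argument \cite{Ge17kazdan}: if $u_c$ solves \eqref{eq:KW} and $\left(\Delta+c\right)\xi_c=h$, the maximum principle gives $\xi_c>e^{-u_c}>0$, while $c\xi_c=h-c^{-1}\Delta h+O\left(c^{-2}\norm{\Delta h}\right)$ for $\abs{c}$ large; since $c\xi_c<0$ this forces $\max_Vh^+\leq C\norm{\Delta h}/\abs{c}$, i.e. $c_h\geq-C\norm{\Delta h}/\max_Vh^+$. You would need this argument, or a genuinely new blow-up analysis for the rescaled family, to complete part (4).
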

\begin{rem}
Checking the proof of \cite[Theorem 1.1]{Ge17kazdan}, one conclude that
\begin{align*}
    c_{h}\geq-\dfrac{C\max_{V}\abs{h}}{\max_{V}h}
\end{align*}
if $\max_{V}h>0$. The multiplicity solutions to  the  Kazdan-Warner equation in the negative case can also be obtained by using the minimax method (cf. \cite{LiuYan20multiple}).
\end{rem}

\begin{cor}[cf. \cite{LiuYan20multiple}]\label{main:cor2}Let $G=(V,E)$ be a connected finite graph.
There exists a constant $\lambda^*\in\left(0,-\min_{V}K\right)$ satisfying:
\begin{enumerate}[$(1)$]
\item if $\lambda\leq0$, then \eqref{eq:K-lambda} has a unique (strict  global  minimum) solution;
\item if $0<\lambda<\lambda^*$, then \eqref{eq:K-lambda} has at least two distinct solutions;
\item if $\lambda=\lambda^*$, then \eqref{eq:K-lambda} has at least a stable  solution;
\item if $\lambda>\lambda^*$, then \eqref{eq:K-lambda} has no solution.
\end{enumerate}
\end{cor}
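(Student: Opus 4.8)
The plan is to read \eqref{eq:K-lambda} as a special case of the generalized Kazdan--Warner equation \eqref{eq:KW-hf} with $h=K_\lambda=K+\lambda$ and $f=\kappa$, and to run it through \autoref{main:degree}. Via the substitution $u\mapsto u-\phi$ indicated just before \autoref{main:degree}, the equation becomes one with constant right-hand side $c=\frac{\int_V\kappa\dif\mu}{\int_V1\dif\mu}$ and prescribed function $K_\lambda e^{\phi}$; since $\int_V\kappa\dif\mu<0$ we have $c<0$, and because $e^{\phi}>0$ the sign of $\max_V(K_\lambda e^{\phi})$ agrees with that of $\max_V K_\lambda=\lambda$ (recall $\max_V K=0$), while $K_\lambda e^{\phi}$ is negative somewhere exactly when $\min_V K_\lambda=\min_V K+\lambda<0$, i.e.\ when $\lambda<-\min_V K$. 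For such $\lambda$ hypothesis $3)$ of \autoref{main:a-priori} holds, the degree $d_{K_\lambda,\kappa}$ is well defined, and \autoref{main:degree} gives $d_{K_\lambda,\kappa}=1$ when $\lambda\le0$ and $d_{K_\lambda,\kappa}=0$ when $0<\lambda<-\min_V K$. The whole corollary will be read off from these two values together with the a priori bound and elementary sub-/supersolution monotonicity.

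For $\lambda\le0$ one has $K_\lambda\le0$ with $K_\lambda\not\equiv0$, so the term $-K_\lambda e^{u}$ in $J_{K_\lambda,\kappa}$ is convex and, using connectedness of $G$ and $K_\lambda<0$ somewhere, strictly convex; hence $J_{K_\lambda,\kappa}$ has a unique critical point, its strict global minimum, proving $(1)$. This same strict convexity makes the $\lambda=0$ solution nondegenerate, so the implicit function theorem continues it to solutions for $\lambda$ slightly positive, which forces $\lambda^*>0$ once we set $\lambda^*=\sup\{\lambda:\eqref{eq:K-lambda}\ \text{is solvable}\}$. On the other side, integrating \eqref{eq:K-lambda} against $1$ gives $\int_V K_\lambda e^{u}\dif\mu=\int_V\kappa\dif\mu<0$; this is impossible once $K_\lambda\ge0$ and $K_\lambda\not\equiv0$, ruling out $\lambda\ge-\min_V K$, and writing $\int_V(-K)e^{u}\dif\mu<(-\min_V K)\int_V e^{u}\dif\mu$ (strict because $K$ is nonconstant) shows moreover that every solvable $\lambda$ satisfies $\lambda<-\min_V K$. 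Monotonicity in $\lambda$ (a solution at $\lambda_1$ is a strict supersolution of \eqref{eq:K-lambda} at any $\lambda_0<\lambda_1$, while the $\lambda=0$ solution is a strict subsolution lying below it) shows the solvable set is the interval to the left of $\lambda^*$, and applying \autoref{main:a-priori} along $\lambda_n\uparrow\lambda^*$ yields a uniform bound, so a subsequence of solutions converges to a solution at $\lambda^*$, with the limit of the stable minimal solutions again stable; this gives $(3)$ and $(4)$, and places $\lambda^*\in(0,-\min_V K)$.

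The substantive point is $(2)$: for $0<\lambda<\lambda^*$ the total degree is $0$, so by itself it produces no solution and must be supplemented by a local count. Here I would produce, by monotone iteration between the $\lambda=0$ solution (a strict subsolution) and a solution at some $\lambda'\in(\lambda,\lambda^*)$ (a strict supersolution), a minimal solution $\underline u_\lambda$ that is a local minimizer of $J_{K_\lambda,\kappa}$, hence of local Brouwer index $+1$; since the additivity/excision property assigns $+1$ to a neighborhood of $\underline u_\lambda$ while $d_{K_\lambda,\kappa}=0$, there must be at least one further solution, yielding the two required distinct solutions. The main obstacle is precisely this step: verifying that $\underline u_\lambda$ is a strict local minimum of index $+1$, equivalently that $DF_{K_\lambda,\kappa}(\underline u_\lambda)$ is positive (i.e.\ $\underline u_\lambda$ is strictly stable), which in degenerate situations forces either a perturbation of $\kappa$ to reach a Morse function or a direct mountain-pass supplement. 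Controlling the ordering of the sub- and supersolutions, and pinning down the behaviour of solutions as $\lambda\uparrow\lambda^*$ (so that $\lambda^*<-\min_V K$ strictly and the stable solution at $\lambda^*$ is attained) via \autoref{main:a-priori} and the compactness result of Section \ref{sec:blowup}, constitutes the remaining technical burden.
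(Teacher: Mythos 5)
Your scaffolding --- degree $1$ for $\lambda\le0$ and degree $0$ for $0<\lambda<\lambda^*$ via \autoref{main:degree}, monotonicity of the solvable set in $\lambda$, a local minimizer plus a degree count for $(2)$, and compactness as $\lambda\uparrow\lambda^*$ for $(3)$ --- is the same as the paper's, and two of your substitutions are legitimate alternatives: uniqueness for $\lambda\le0$ via strict convexity of $J_{K_\lambda,\kappa}$ (the paper uses the strong maximum principle), and $\lambda^*>0$ via the implicit function theorem at the nondegenerate $\lambda=0$ solution (the paper builds an explicit supersolution). But your proof of $(2)$ has a genuine gap that you yourself flag and then leave open: to convert the total degree $d_{K_\lambda,\kappa}=0$ into a second solution you must show the local minimizer $\underline u_\lambda$ carries local Brouwer index $+1$ (equivalently, that it is strictly stable, or at least an isolated critical point), and you offer only an unexecuted ``perturbation to a Morse function or mountain-pass supplement''. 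This is precisely the step the paper does prove, inside \autoref{cor:-existence-negative} (to which its proof of the corollary appeals): minimize $J_{K_\lambda,\kappa}$ over an order interval $\set{-A\le v\le u_{\lambda'}}$, use \autoref{lem:maximum} to see the minimizer $u$ is interior, hence a local minimum, and then rule out degeneracy by a Taylor-expansion trick: if $\xi\not\equiv0$ lay in the kernel of the Hessian, then $-\Delta\xi=K_\lambda e^{u}\xi$ forces $\xi$ nonconstant, local minimality forces the third $t$-derivative of $J_{K_\lambda,\kappa}(u+t\xi)$ at $t=0$ to vanish and the fourth to be $\ge0$, yet the fourth equals $-\tfrac12\sum_{x,y}\omega_{xy}\left(\xi(x)-\xi(y)\right)\left(\xi^3(x)-\xi^3(y)\right)<0$, a contradiction. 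Hence $\det DF_{K_\lambda,\kappa}(u)>0$, and excision against the total degree $0$ yields the second solution. Without this (or an equivalent) argument, $(2)$ is unproven.

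The compactness step is also circular as written. \autoref{main:a-priori} bounds solutions for a \emph{fixed} pair $(h,c)$; to get a bound uniform along $\lambda_n\uparrow\lambda^*$ you would need the quantitative version \autoref{thm:a-priori}, but its condition $(5)$ demands $\min_V K_{\lambda_n}\le-A^{-1}$ for a fixed $A$, i.e.\ $\lambda_n\le-\min_V K-A^{-1}$ uniformly, which is exactly the strict inequality $\lambda^*<-\min_V K$ you are still trying to establish --- your integral argument only shows each solvable $\lambda$ is $<-\min_V K$, which does not exclude $\lambda^*=-\min_V K$. The paper avoids the circle by arguing directly from \autoref{thm:alternative}: if the stable solutions $u_\lambda$ were unbounded, they would be uniformly bounded below, bounded on $\set{K_{\lambda^*}>0}\neq\emptyset$, with $\max_V u_\lambda\to+\infty$; then $\int_V K_\lambda e^{u_\lambda}\dif\mu=\int_V\kappa\dif\mu$ gives $\int_V K_\lambda^{\pm}e^{u_\lambda}\dif\mu\le C$, hence $\norm{\Delta u_\lambda}_{L^1\left(V\right)}\le C$, and \autoref{lem:elliptic} together with the bound on $\set{K_{\lambda^*}>0}$ forces $\max_V u_\lambda\le C$, a contradiction. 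Only after this compactness does one obtain a stable solution at $\lambda^*$, and then $\int_V K_{\lambda^*}e^{u}\dif\mu<0$ yields the strict placement $\lambda^*\in\left(0,-\min_V K\right)$. Finally, a smaller hole: you take the $\lambda=0$ solution as a subsolution ``lying below'' a solution at $\lambda'>\lambda$; that ordering requires a maximum-principle argument (or can be sidestepped, as in the paper, by using a large negative constant as the subsolution).
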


\section{Preliminaries}\label{sec:preliminaries}

In this section, we provide discrete versions of the strong maximum principle, the elliptic estimate,  Kato's inequality and the sub- and super-solutions principle.

We begin with the following strong maximum principle.
\begin{lem}[Strong maximum principle]\label{lem:maximum} If $u$ is not a constant function, then there exists $x_1\in V$ such that
\begin{align*}
u\left(x_1\right)=\max_{V}u,\quad\Delta u\left(x_1\right)<0.
\end{align*}
\end{lem}
\begin{proof}Choose $x,y\in V$ such that
\begin{align*}
u(x)=\max_{V}u,\quad u(y)=\min_{V}u.
\end{align*}
Since $G$ is connected, there exist  $x_i\in V$ such that $x=x_1, y=x_m$ and
\begin{align*}
\omega_{x_{i}x_{i+1}}>0,\quad i=1,\dotsc,m-1.
\end{align*}
Since $u$ is not a constant function, we have
\begin{align*}
u(x)>u(y).
\end{align*}
Thus there exists some $1\leq i \leq m-1$ such that $u(x_1)=\cdots =u(x_i)>u(x_{i+1})$. Without loss of generality, we may assume that $u\left(x_1\right)>u(x_2)$. Then $u\left(x_1\right)=\max_{V}u$ and
\begin{align*}
\Delta u\left(x_1\right)=&\dfrac{1}{\mu_{x_1}}\sum_{y\in V}\omega_{x_1y}\left(u(y)-u\left(x_1\right)\right)\\
\leq&\dfrac{1}{\mu_{x_1}}\omega_{x_1x_2}\left(u(x_2)-u\left(x_1\right)\right)\\
<&0.
\end{align*}
We complete the proof.
\end{proof}

Since $G$ is a finite graph, all of the spaces $L^p\left(V\right)$ and $W^{1,p}\left(V\right)$ with $1\leq p\leq\infty$  are exactly $V^{\mathbb{R}}$,  a finite dimensional linear space. Note that every two norms on $V^{\mathbb{R}}$ are equivalent. Denote by $\norm{\cdot}$ the norm of $V^{\mathbb{R}}$ for convenience. Set
\begin{align*}
V_0^{\mathbb{R}}=\set{u\in V^{\mathbb{R}} : \int_{V}u\dif\mu=0}.
\end{align*}
Then all of $\max_{V}\abs{\Delta u}, \max_{V}u-\min_{V}u$ and $\max_{V}\abs{\nabla u}$ are norms of $u$ on $V_0^{\mathbb{R}}$. Consequently, we have the following elliptic estimate.
\begin{lem}[Elliptic estimate]\label{lem:elliptic}
There is a positive constant $C$ such that for all $u\in V^{\mathbb{R}}$,
\begin{align*}
\max_{V}u-\min_{V}u\leq C\max_{V}\abs{\Delta u}.
\end{align*}
\end{lem}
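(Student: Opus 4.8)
The plan is to reduce the inequality to the finite-dimensional subspace $V_0^{\mathbb{R}}=\set{u\in V^{\mathbb{R}}:\int_{V}u\dif\mu=0}$ and then invoke the equivalence of norms. First I would observe that both sides are invariant under adding a constant to $u$: for $a\in\mathbb{R}$ we have $\Delta(u+a)=\Delta u$ and $\max_{V}(u+a)-\min_{V}(u+a)=\max_{V}u-\min_{V}u$. Hence it suffices to prove the estimate for $u\in V_0^{\mathbb{R}}$, since an arbitrary $u$ can be replaced by $u-\bigl(\int_{V}u\dif\mu\bigr)/\bigl(\int_{V}1\dif\mu\bigr)$ without changing either quantity.

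Next I would verify that $u\mapsto\max_{V}u-\min_{V}u$ and $u\mapsto\max_{V}\abs{\Delta u}$ are genuine norms on $V_0^{\mathbb{R}}$. Absolute homogeneity and subadditivity follow at once from the linearity of $\Delta$ together with the elementary bounds $\max_{V}(u+v)\leq\max_{V}u+\max_{V}v$ and $\min_{V}(u+v)\geq\min_{V}u+\min_{V}v$. The only delicate point is definiteness. For the first quantity, $\max_{V}u-\min_{V}u=0$ forces $u$ to be constant, and the only constant lying in $V_0^{\mathbb{R}}$ is $0$ because $\mu$ is positive. For the second, $\max_{V}\abs{\Delta u}=0$ means $u$ is harmonic; if $u$ were nonconstant, the strong maximum principle (\autoref{lem:maximum}) would give a point $x_1$ with $\Delta u(x_1)<0$, a contradiction, so again $u\equiv0$ in $V_0^{\mathbb{R}}$.

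Finally, since $V$ is finite, $V_0^{\mathbb{R}}$ is a finite-dimensional real vector space on which all norms are equivalent; in particular there is a constant $C>0$ with $\max_{V}u-\min_{V}u\leq C\max_{V}\abs{\Delta u}$ for all $u\in V_0^{\mathbb{R}}$, and the first step extends this to every $u\in V^{\mathbb{R}}$. I expect the main obstacle to be the definiteness of $\max_{V}\abs{\Delta u}$, that is, the fact that harmonic functions on a connected finite graph are constant; this is exactly where the connectedness of $G$ enters, via \autoref{lem:maximum}.
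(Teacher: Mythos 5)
Your proposal is correct and follows essentially the same route as the paper: the paper also reduces the statement to the observation that $\max_{V}u-\min_{V}u$ and $\max_{V}\abs{\Delta u}$ are norms on the finite-dimensional space $V_0^{\mathbb{R}}$ and then invokes equivalence of norms, with connectedness (via \autoref{lem:maximum}) guaranteeing that harmonic functions are constant. Your write-up merely makes explicit the details (translation invariance and the definiteness check) that the paper leaves implicit, so there is nothing to add.
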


For any function $f\in V^\mathbb{R}$, we denote by $f^+=\max\set{f,0}$ and $f^-=\left(-f\right)^+$.
For any set $A$ we define the indicator
$$\chi_A(t)\coloneqq\left\{\begin{array}{ll}
1 & t\in A, \\
0 & t\notin A.
\end{array}\right.$$
The following inequality is useful.
\begin{lem}[Kato's inequality]\label{lem:kato}
\begin{align*}
\Delta u^+\geq\chi_{\set{u>0}}\Delta u.
\end{align*}
\end{lem}
\begin{proof}By definition,
\begin{align*}
\Delta u^+(x)=\dfrac{1}{\mu_{x}}\sum_{y\in V}\omega_{xy}\left(u^+(y)-u^+(x)\right).
\end{align*}
If $u(x)>0$, then $u^+(x)=u(x)$ and
\begin{align*}
\Delta u^+(x)\geq\dfrac{1}{\mu_{x}}\sum_{y\in V}\omega_{xy}\left(u(y)-u(x)\right)=\Delta u(x).
\end{align*}
If $u(x)\leq0$, then $u^+(x)=0$ and
\begin{align*}
\Delta u^+(x)=\dfrac{1}{\mu_{x}}\sum_{y\in V}\omega_{xy}u^+(y)\geq0. 
\end{align*}
We obtain the desired inequality.
\end{proof}

Let $f:V\times\mathbb{R}\To\mathbb{R}$ be a smooth function. We say that $\phi$ is a sub-solution (super-solution) to
\begin{align}\label{eq:quasilinear}
    -\Delta u(x)=f\left(x,u(x)\right),\quad\forall x\in V,
\end{align}
if  $\Delta\phi(x)+f\left(x,\phi(x)\right)\geq(\leq0)$ for all $x\in V$. Denote
\begin{align*}
    J(u)=\int_{V}\left(\dfrac12\abs{\nabla u}^2-F\left(\cdot,u(\cdot)\right)\right)\dif\mu,
\end{align*}
where $\frac{\partial F}{\partial u}=f$.
Then we have the following sub- and  super-solutions principle.

\begin{lem}[sub- and  super-solutions  principle]\label{lem:sub-super}
Assume $\phi$ and $\psi$ are sub-solution and super-solution to \eqref{eq:quasilinear} respectively with $\phi\leq\psi$. Then any minimizer of $J$ in $\set{u\in V^{\mathbb{R}}:\phi\leq u\leq\psi}$ solves \eqref{eq:quasilinear}.
\end{lem}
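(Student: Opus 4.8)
The plan is to exploit the finite-dimensionality of $V^{\mathbb{R}}$ and argue pointwise, using the defining inequalities of sub- and super-solutions precisely at those vertices where the constraint $\phi\le u\le\psi$ is active. Since $V^{\mathbb{R}}$ is finite dimensional, the admissible set $M\coloneqq\set{v\in V^{\mathbb{R}}:\phi\le v\le\psi}$ is compact and convex, so minimizers of $J$ over $M$ exist; the heart of the matter is to show that \emph{any} such minimizer $u$ in fact satisfies the Euler--Lagrange equation $-\Delta u=f(\cdot,u)$ pointwise. First I would compute the first variation of $J$: for any $\eta\in V^{\mathbb{R}}$, differentiating under the (finite) sum and applying the Green formula \eqref{Green} together with $\frac{\partial F}{\partial u}=f$ yields
\begin{align*}
\left.\frac{\dif}{\dif t}\right|_{t=0}J(u+t\eta)=\int_V\left(\Gamma(u,\eta)-f(\cdot,u)\eta\right)\dif\mu=-\int_V\left(\Delta u+f(\cdot,u)\right)\eta\dif\mu.
\end{align*}
Thus it suffices to prove that $g(x)\coloneqq\Delta u(x)+f(x,u(x))$ vanishes at every $x\in V$.

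Next I would fix a vertex $x_0$ and use the elementary one-vertex test function $\chi_{\set{x_0}}$ (equal to $1$ at $x_0$ and $0$ elsewhere), for which the variation above equals $-g(x_0)\mu_{x_0}$. Before running the contradiction, I would record the pointwise comparison that handles active constraints: since $u\le\psi$ on all of $V$, if $u(x_0)=\psi(x_0)$ then directly from \eqref{mu-Laplace}
\begin{align*}
\Delta u(x_0)=\frac{1}{\mu_{x_0}}\sum_{y\in V}\omega_{x_0y}\left(u(y)-u(x_0)\right)\le\frac{1}{\mu_{x_0}}\sum_{y\in V}\omega_{x_0y}\left(\psi(y)-\psi(x_0)\right)=\Delta\psi(x_0),
\end{align*}
and symmetrically $\Delta u(x_0)\ge\Delta\phi(x_0)$ whenever $u(x_0)=\phi(x_0)$.

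With these in hand the argument closes quickly. Suppose $g(x_0)>0$. If $u(x_0)<\psi(x_0)$, then $u+t\chi_{\set{x_0}}\in M$ for all small $t>0$, and since the variation $-g(x_0)\mu_{x_0}$ is negative we obtain $J(u+t\chi_{\set{x_0}})<J(u)$ for small $t>0$, contradicting minimality. If instead $u(x_0)=\psi(x_0)$, then the super-solution inequality $\Delta\psi(x_0)+f(x_0,\psi(x_0))\le0$ combined with the comparison above gives $g(x_0)\le\Delta\psi(x_0)+f(x_0,\psi(x_0))\le0$, again a contradiction. The case $g(x_0)<0$ is symmetric, decreasing $u$ at $x_0$ via $-\chi_{\set{x_0}}$ when $u(x_0)>\phi(x_0)$ and invoking the sub-solution inequality $\Delta\phi(x_0)+f(x_0,\phi(x_0))\ge0$ when $u(x_0)=\phi(x_0)$. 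Hence $g\equiv0$ and $u$ solves \eqref{eq:quasilinear}. The only genuinely delicate point is this boundary analysis: one must rule out that an active constraint obstructs the admissible variation, and it is precisely the sub-/super-solution inequalities, together with the discrete Laplacian comparison, that dispose of that obstacle; every other step is routine.
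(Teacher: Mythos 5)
Your proof is correct, but it takes a genuinely different route from the paper's. The paper argues globally via the strong maximum principle (\autoref{lem:maximum}): after reducing to $\phi\not\equiv\psi$ and claiming $\psi>\phi$, it shows that if a minimizer $u$ touches $\phi$ at a single vertex then $u\equiv\phi$ --- at a touching point the maximum principle applied to $u-\phi$ gives $\Delta(u-\phi)(x_1)>0$, which contradicts the one-sided variation inequality combined with the sub-solution property --- in which case the variation inequality makes $\phi$ a super-solution as well, hence a solution; the case of $\psi$ is symmetric, and otherwise $\phi<u<\psi$ permits free two-sided variations. You instead argue locally, vertex by vertex: where the constraints are inactive you use the one-point variations $\pm t\chi_{\set{x_0}}$, and where a constraint is active, say $u(x_0)=\psi(x_0)$, you replace the forbidden variation by the monotonicity comparison $\Delta u(x_0)\le\Delta\psi(x_0)$ (valid since $u\le\psi$ with equality at $x_0$ and $\omega_{x_0y}\ge0$), which together with the super-solution inequality forces $\Delta u(x_0)+f\left(x_0,u(x_0)\right)\le0$; symmetrically at $\phi$. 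Your route buys elementarity and a little extra generality: it uses neither the strong maximum principle nor the connectedness of $G$ that \autoref{lem:maximum} requires, and it sidesteps the claim $\psi>\phi$, which the paper needs so that its one-sided variations at touching points remain admissible but whose proof (via the displayed inequality for $\psi-\phi$ and \autoref{lem:maximum}) is left implicit. What the paper's route buys in exchange is the sharper structural dichotomy that a minimizer either coincides identically with $\phi$ or $\psi$ or lies strictly between them, a type of argument the paper reuses later (cf.\ the proof of Corollary \ref{cor:-existence-negative}, where $-A<u<u_0$ is needed to conclude that the constrained minimizer is a genuine local minimum).
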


\begin{proof}Without loss of generality, assume $\phi\not\equiv\psi$. Then
\begin{align*}
    -\Delta\left(\psi-\phi\right)(x)\geq f\left(x,\psi(x)\right)-f\left(x,\phi(x)\right),\quad x\in V.
\end{align*}
We claim that $\psi>\phi$. Let $u$ be a minimizer of $J$ in $\set{u\in V^{\mathbb{R}}:\phi\leq u\leq\psi}$. If $u\left(x_0\right)=\phi\left(x_0\right)$ for some $x_0\in V$, then $u\equiv\phi$. In fact, since $u-\phi\geq0$ and $\min_{V}\left(u-\phi\right)=0$, if $u\not\equiv\phi$, applying  \autoref{lem:maximum}, then there exists some $x_1 \in V$ such that
\begin{align}\label{add-x1-eq}
u\left(x_1\right)-\phi\left(x_1\right)=0,\quad \Delta(u-\phi)\left(x_1\right)>0.
\end{align}
On the one hand, since $u$ is a minimizer of $J$, we have
\begin{equation}\label{add-xi-eq}
    \begin{split}
        0\leq&\dfrac{\dif}{\dif t}\left.J\left(u+t\delta_{x_1}\right)\right\rvert_{t=0} \\
    =&\int_{V}\left(-\Delta u-f\left(\cdot,u(\cdot)\right)\right)\delta_{x_1}\dif\mu  \\
    =&-\Delta u\left(x_1\right)-f\left(x_1,u(x_1)\right).
    \end{split}
\end{equation}
On the other hand, by \eqref{add-x1-eq} and \eqref{add-xi-eq},  we have
\begin{align*}
    0<&\Delta\left(u-\phi\right)\left(x_1\right)\\
    \leq&-f\left(x_1,u(x_1)\right)+f\left(x_1,\phi(x_1)\right)\\
    =&0,
\end{align*}
which is a contradiction.

 If $u\equiv\phi$, then \eqref{add-xi-eq} implies that $\phi$ is also a super-solution and thus $u$ solves \eqref{eq:quasilinear}.  Similarly if $u(x)=\psi(x)$ for some $x\in V$ then $u\equiv\psi$, and therefore $u$ is a solution. If $\phi(x)<u(x)<\psi(x)$ for any $x\in V$, then for every $\eta\in V^{\mathbb{R}}$, since $u$ is a minimizer,
\begin{align*}
    0=\dfrac{\dif}{\dif t}\left.J\left(u+t\eta\right)\right\rvert_{t=0}
    =\int_{V}\left(-\Delta u-f\left(\cdot,u(\cdot)\right)\right)\eta\dif\mu.
\end{align*}
Thus $u$ solves \eqref{eq:quasilinear}.
\end{proof}

\section{Blow-up analysis} \label{sec:blowup}

First we state the following discrete analog of that of Brezis and Merle's result \cite{BreMer91uniform}.
\begin{theorem}\label{thm:alternative}Let $G=(V,E)$ be a  connected finite graph.
Let $u_n\in V^{\mathbb{R}}$ be a sequence  of solutions to
\begin{align*}
-\Delta u_n(x)=h_n(x)e^{u_n(x)}-c_n,\quad x\in V
\end{align*}
where $h_n\in V^{\mathbb{R}}$ and
$c_n\in\mathbb{R}$ satisfy
\begin{align*}
\lim_{n\to\infty}h_n(x)=h(x),\quad\forall x\in V
\end{align*}
and
\begin{align*}
    \lim_{n\to\infty}c_n=c.
\end{align*}
Then after passing to a subsequence, we have the following alternatives:
\begin{enumerate}[$(1)$]
\item either $u_n$ is uniformly bounded, or
\item $u_n$ converges uniformly to $-\infty$, or
\item there exists $x_0\in V$ such that $u_n\left(x_0\right)$ converges to $+\infty$ and $h\left(x_0\right)=0$. Moreover, $u_n$ is uniformly bounded from below in $V$ and above in $\set{x\in V: h(x)>0}$.
\end{enumerate}
\end{theorem}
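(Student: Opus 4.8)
The plan is to exploit the finiteness of $V$ and reduce the whole statement to the behavior of the finitely many scalar sequences $\left\{u_n(x)\right\}_n$. First I would pass to a subsequence (finitely many extractions suffice) so that $u_n(x)\to u_\infty(x)\in[-\infty,+\infty]$ for every $x\in V$ and, simultaneously, every difference $u_n(x)-u_n(y)$ converges in $[-\infty,+\infty]$; I would also arrange that the vertices realizing $M_n\coloneqq\max_V u_n$ and $m_n\coloneqq\min_V u_n$ are independent of $n$, say $x^\ast$ and $x_\ast$. This partitions $V$ into the blow-up set $S_+=\set{u_\infty=+\infty}$, the finite set $S_0$, and the blow-down set $S_-=\set{u_\infty=-\infty}$, and the three alternatives correspond exactly to $S_+=S_-=\emptyset$, to $S_-=V$, and to $S_+\neq\emptyset$; so it suffices to show these exhaust all cases.

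The easy case is $S_+=\emptyset$, i.e. $M_n$ bounded above. Here I would feed the equation into the elliptic estimate \autoref{lem:elliptic}: since $\Delta u_n=c_n-h_ne^{u_n}$,
\begin{align*}
M_n-m_n\le C\max_V\abs{\Delta u_n}\le C\left(\abs{c_n}+\norm{h_n}_{L^\infty(V)}e^{M_n}\right),
\end{align*}
and the right-hand side stays bounded because $e^{M_n}$ does and $\norm{h_n}_{L^\infty(V)},\abs{c_n}$ converge. Thus the oscillation $M_n-m_n$ is bounded, so either $M_n$ tends to a finite limit (whence $m_n$ is bounded below and $u_n$ is uniformly bounded, alternative $(1)$) or $M_n\to-\infty$ (whence $u_n\to-\infty$ uniformly, alternative $(2)$). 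In particular the mixed situation in which some vertices stay finite while others diverge to $-\infty$ is automatically excluded once $S_+=\emptyset$.

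It remains to treat $S_+\neq\emptyset$, i.e. $M_n\to+\infty$, and to identify it as alternative $(3)$. For the uniform lower bound I would argue by contradiction, assuming $m_n\to-\infty$, and prove a discrete Harnack/propagation statement. At any vertex $z$ with $u_n(z)\to-\infty$ one has $h_n(z)e^{u_n(z)}\to0$, so $-\Delta u_n(z)\to-c$ and hence $\sum_{y\in V}\omega_{zy}\left(u_n(y)-u_n(z)\right)$ stays bounded; were some neighbour to satisfy $u_n(y)-u_n(z)\to+\infty$, the bounded total would force another neighbour with $u_n(y')-u_n(z)\to-\infty$, eventually below $m_n$, which is impossible. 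Consequently every neighbour of a ``bottom-level'' vertex (one with $u_n-m_n$ bounded) is again bottom-level; starting from $x_\ast$ and using connectedness of $G$, all of $V$ is bottom-level, so $M_n-m_n$ is bounded and $M_n\to-\infty$, contradicting $M_n\to+\infty$. Hence $S_-=\emptyset$ and $u_n\ge-C_0$. This propagation step is the one I expect to be the main obstacle, since it is the only place that genuinely uses connectedness rather than a one-vertex inequality.

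With the uniform lower bound in hand, the bound above on $\set{h>0}$ becomes an exponential-beats-linear estimate. If some $x_0$ with $h(x_0)>0$ had $u_n(x_0)\to+\infty$, then, using $u_n\ge-C_0$ and $\sum_{y\in V}\omega_{x_0y}>0$,
\begin{align*}
\mu_{x_0}\left(h_n(x_0)e^{u_n(x_0)}-c_n\right)=\sum_{y\in V}\omega_{x_0y}\left(u_n(x_0)-u_n(y)\right)\le\left(u_n(x_0)+C_0\right)\sum_{y\in V}\omega_{x_0y},
\end{align*}
whose left side grows exponentially in $u_n(x_0)$ while the right side grows only linearly, a contradiction. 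Thus $u_n$ is bounded above on $\set{h>0}$. Finally, to produce the distinguished blow-up point with $h=0$, I would test the equation at the maximizer $x^\ast$ (where $\Delta u_n(x^\ast)\le0$), obtaining $h_n(x^\ast)e^{M_n}\ge c_n$ and hence $h(x^\ast)\ge0$; since $x^\ast$ cannot lie in $\set{h>0}$ by the bound just proved, $h(x^\ast)=0$, and $u_n(x^\ast)=M_n\to+\infty$ is the required point, completing alternative $(3)$.
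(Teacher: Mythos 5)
Your proof is correct, and its overall skeleton (bounded-above case via the elliptic estimate; blow-up case giving a uniform lower bound, an upper bound on $\set{x\in V: h(x)>0}$, and a maximizer with $h=0$) matches the paper's; but the crucial step --- the uniform lower bound when $\max_V u_n\to+\infty$ --- is proved by a genuinely different argument. The paper follows Brezis--Merle: it applies Kato's inequality (\autoref{lem:kato}) to $u_n^-=\left(-u_n\right)^+$ to get $-\Delta u_n^-\leq c_n^++h_n^-$, deduces an $L^1$ bound on $\Delta u_n^-$ (using that $\Delta u_n^-$ integrates to zero), and then invokes the elliptic estimate \autoref{lem:elliptic}, so connectedness enters only through that lemma. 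You instead run a vertex-level propagation: assuming $m_n\to-\infty$, at a bottom-level vertex $z$ the equation forces $\mu_z\Delta u_n(z)$ to stay bounded, each summand $\omega_{zy}\left(u_n(y)-u_n(z)\right)$ is bounded below (because $u_n(y)\geq m_n$ while $u_n(z)\leq m_n+C$), hence each is bounded above, so every neighbour of $z$ is again bottom-level; connectedness then makes all of $V$ bottom-level, forcing $M_n-m_n\leq C$ and contradicting $M_n\to+\infty$. This is more elementary (no Kato's inequality, no $L^1$ estimate) and uses connectedness directly; the paper's route buys a closer parallel with the continuous Brezis--Merle theory and reuses lemmas already established. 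The remaining steps (exponential-versus-linear growth at a vertex where $h>0$, and testing the equation at the maximizer to get $h\left(x^*\right)\geq0$) coincide with the paper's.

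One small imprecision: you state the propagation claim for an arbitrary vertex $z$ with $u_n(z)\to-\infty$ and derive the contradiction ``$u_n\left(y'\right)$ eventually below $m_n$.'' That conclusion requires $u_n(z)-m_n$ to be bounded: for a general such $z$, having $u_n\left(y'\right)-u_n(z)\to-\infty$ is perfectly compatible with $u_n\left(y'\right)\geq m_n$. Since every application in your induction starts from $x_*$ and only ever invokes the claim at bottom-level vertices, the proof goes through verbatim once the claim is stated for bottom-level $z$ only.
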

\begin{proof}
If $u_n$ is uniformly bounded from above, then $\Delta u_n$ is uniformly bounded. Applying   \autoref{lem:elliptic},
\begin{align*}
\max_{V}u_n-\min_{V}u_n\leq C.
\end{align*}
If $\min_{V}u_n$ is uniformly bounded from below, then we obtain the first alternative. If $\liminf\limits_{n\to\infty}\min_{V}u_n=-\infty$, then we obtain the second alternative.

If $\limsup\limits_{n\to\infty}u_{n}=+\infty$, then without loss of generality we may assume for some $x_0\in V$,
\begin{align*}
0<u_n\left(x_0\right)=\max_{V}u_n\to+\infty
\end{align*}
as $n\to\infty$. According to \autoref{lem:kato}, we get
\begin{align*}
-\Delta u_n^{-}=&-\Delta\left(-u_n\right)^{+}\\
\leq&-\chi_{\set{-u_n>0}}\Delta\left(-u_n\right)\\
=&\chi_{\set{u_n<0}}\left(c_n-h_ne^{u_n}\right)\\
\leq&c_n^++h_n^{-}.
\end{align*}
Thus
\begin{align*}
\norm{\Delta u_n^{-}}_{L^1\left(V\right)}=&\int_{V}\abs{\Delta u_n^{-}}\dif\mu\\
=&\int_{\set{\Delta u_n^{-}\geq 0}}\Delta u_n^-\dif\mu-\int_{\set{\Delta u_n^{-}<0}}\Delta u_n^-\dif\mu\\
=&-2\int_{\set{\Delta u_n^{-}<0}}\Delta u_n^-\dif\mu\\
\leq&2\int_{\set{\Delta u_n^-<0}}\left(c_n^++h_n^-\right)\dif\mu\\
\leq&C.
\end{align*}
Applying   \autoref{lem:elliptic},
\begin{align*}
\max_{V}u_n^{-}=\max_{V}u_n^{-}-\min_{V}u_n^{-}\leq C.
\end{align*}
Thus $u_n$ is uniformly bounded from below.

Since $u_n$ is uniformly bounded from below, we have for every $x_1\in V$
\begin{align*}
h_n\left(x_1\right)e^{u_n\left(x_1\right)}-c_n=&-\Delta u_n\left(x_1\right)\\
=&\dfrac{1}{\mu_{x_1}}\sum_{y\in V}\omega_{x_1y}\left(u_n\left(x_1\right)-u_n(y)\right)\\
\leq&C\left(u_n\left(x_1\right)+1\right),
\end{align*}
which implies
\begin{align*}
h_n\left(x_1\right)\leq C \left(u_n\left(x_1\right)+1\right)e^{-u_n\left(x_1\right)}.
\end{align*}
Taking $n\to\infty$, we deduce that
\begin{align*}
h\left(x_1\right)\leq0
\end{align*}
whenever $\limsup\limits_{n\to\infty}u_n\left(x_1\right)=+\infty$. In other words, $u_n$ is uniformly bounded in $\set{x\in V: h(x)>0}$.

Now we prove that $h\left(x_0\right)=0$. The above argument yields that $h\left(x_0\right)\leq0$. It suffices to prove that  $h\left(x_0\right)\geq0$. Applying the maximum principle, we have
\begin{align*}
h_n\left(x_0\right)e^{u_n\left(x_0\right)}-c_n=-\Delta u_n\left(x_0\right)\geq0.
\end{align*}
Thus
\begin{align*}
h_n\left(x_0\right)\geq c_n\left(x_0\right)e^{-u_n\left(x_0\right)}.
\end{align*}
Letting $n\to\infty$, we deduce that
\begin{align*}
h\left(x_0\right)\geq0. 
\end{align*}
The proof is completed.
\end{proof}

Now we can prove the following compactness result.
\begin{theorem}\label{thm:a-priori}
Let $G=(V,E)$ be a connected finite graph with weight $\omega$ and measure $\mu$. Assume that there exists a positive constant $A$ satisfying:
\begin{enumerate}[$(1)$]
    \item $\max_{V}\left(\abs{h}+\abs{c}\right)\leq A$;
    \item if $h(x)>0$ for some $x\in V$, then $h(x)\geq A^{-1}$;
    \item if $c>0$, then $c\geq A^{-1}$;
    \item if $c=0$, then  $\int_{V}h\dif\mu\leq -A^{-1}$;
    \item if $c<0$, then $c\leq-A^{-1}$ and $\min_{V}h\leq -A^{-1}$.
\end{enumerate}
Then there exists a positive constant $C$ depending only on $A$, $G$, $\omega$ and $\mu$ such that every solution to \eqref{eq:KW}
satisfies
\begin{align*}
\max_{x\in V}\abs{u(x)}\leq C.
\end{align*}
\end{theorem}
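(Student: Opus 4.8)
The plan is to argue by contradiction and reduce everything to the trichotomy of \autoref{thm:alternative}. Suppose the conclusion fails; then there are functions $h_n$ and constants $c_n$, all satisfying $(1)$--$(5)$ with the same $A$, together with solutions $u_n$ of \eqref{eq:KW} (with $h,c$ replaced by $h_n,c_n$) for which $\max_V\abs{u_n}\to+\infty$. Since $V$ is finite and $\max_V(\abs{h_n}+\abs{c_n})\le A$, after passing to a subsequence I may assume $h_n\to h$ and $c_n\to c$; moreover $(2)$, $(3)$ and $(5)$ force each value $h_n(x)$ into $(-\infty,0]\cup[A^{-1},\infty)$ and each $c_n$ into a fixed sign, so I can arrange that $V^{+}=\set{x:h_n(x)\ge A^{-1}}$ and $V^{\le0}=\set{x:h_n(x)\le0}$ are independent of $n$ and that the sign of $c_n$ is constant, the limits then satisfying $(1)$--$(5)$ as well. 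Applying \autoref{thm:alternative} to $(u_n)$ places us in one of the three alternatives, and since $\max_V\abs{u_n}\to+\infty$ the first (boundedness) is excluded; it remains to rule out the second and the third.

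A recurring tool is the identity obtained by integrating the equation and using \eqref{Green} with test function $1$, namely $\int_V h_ne^{u_n}\dif\mu=c_n\int_V1\dif\mu$. In the second alternative, where $u_n\to-\infty$ uniformly, the left-hand side tends to $0$, forcing $c_n\to0$; combined with the fixed sign of $c_n$ this gives $c_n\equiv0$, so $c=0$ and $(4)$ applies. Writing $u_n=\bar u_n+w_n$ with $\bar u_n$ the mean and $\int_Vw_n\dif\mu=0$, the bounded oscillation in this alternative gives $\norm{w_n}\le C$; from $-\Delta w_n=h_ne^{\bar u_n}e^{w_n}\to0$ together with \autoref{lem:elliptic} I obtain $w_n\to0$. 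Dividing the identity by $e^{\bar u_n}$ yields $\int_Vh_ne^{w_n}\dif\mu=0$, whence $\int_Vh\dif\mu=0$ in the limit, contradicting $(4)$.

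The heart of the matter, and where I expect the real work to lie, is the third alternative. After a further extraction set $B=\set{x:u_n(x)\to+\infty}$; \autoref{thm:alternative} guarantees $B\neq\emptyset$, that $u_n$ is bounded below, and that $u_n$ is bounded above on $\set{x:h(x)>0}=V^{+}$, so $B\subseteq V^{\le0}$ and hence $h_n\le0$ on $B$. The key idea is to sum the equation against $\mu_x$ over $x\in B$ rather than over all of $V$. The Laplacian sum telescopes: the contributions from edges inside $B$ cancel by the symmetry $\omega_{xy}=\omega_{yx}$, leaving only the boundary terms $\sum_{x\in B,\,y\notin B}\omega_{xy}(u_n(x)-u_n(y))$, each of which tends to $+\infty$ since $u_n(x)\to+\infty$ while $u_n(y)$ stays bounded; as $G$ is connected, when $B\neq V$ at least one such edge exists, so the left-hand side diverges to $+\infty$. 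Meanwhile the right-hand side equals $\sum_{x\in B}\mu_xh_n(x)e^{u_n(x)}-c_n\sum_{x\in B}\mu_x$, which is bounded above because $h_n\le0$ on $B$ makes the first sum nonpositive and $\abs{c_n}\le A$ controls the second; this contradiction disposes of the case $B\subsetneq V$. The residual possibility $B=V$ forces $h_n\le0$ everywhere, which excludes $c\ge0$ (for $c>0$ the identity would require a positive value of $h_n$, and the case $c=0$, $h\le0$ admits no solution), while for $c<0$ condition $(5)$ supplies a vertex with $h_n\le-A^{-1}$ that blows up, driving $\int_Vh_ne^{u_n}\dif\mu\to-\infty$ against the bounded value $c_n\int_V1\dif\mu$. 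Every alternative having been contradicted, the a priori bound follows.
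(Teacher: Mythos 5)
Your proof is correct, and although it follows the same skeleton as the paper's --- argue by contradiction, extract convergent data satisfying $(1)$--$(5)$, and run the trichotomy of \autoref{thm:alternative}, with the $u_n\to-\infty$ case and the final sign analysis handled essentially as the paper does --- your treatment of the third alternative is genuinely different, and that is the heart of the proof. The paper's route is global: from boundedness of $u_n$ on $\set{x\in V:h(x)>0}$ it extracts $\int_{V}h_n^{-}e^{u_n}\dif\mu\leq C$, hence an $L^{1}$ bound on $\Delta u_n$, and then \autoref{lem:elliptic} bounds the oscillation $\max_{V}u_n-\min_{V}u_n$, so that blow-up at one vertex forces uniform blow-up on all of $V$. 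Your route is local: summing the equation over the blow-up set $B$, the interior edge contributions cancel by $\omega_{xy}=\omega_{yx}$, so the Laplacian side reduces to the flux $\sum_{x\in B,\,y\notin B}\omega_{xy}\left(u_n(x)-u_n(y)\right)$, which diverges to $+\infty$ by connectivity whenever $\emptyset\neq B\subsetneq V$, while the right-hand side stays bounded above since $h_n\leq0$ on $B$; this discrete divergence-theorem argument rules out partial blow-up directly and dispenses with the $L^{1}$ elliptic-estimate step. Both arguments arrive at the same intermediate fact ($B=V$, hence $h_n\leq0$ everywhere) and conclude identically; yours is arguably more elementary and localized, while the paper's oscillation bound is a device it reuses elsewhere (e.g.\ in the proof of \autoref{thm:flat}). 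Two details you should tighten: first, the parenthetical ``the case $c=0$, $h\leq0$ admits no solution'' is literally false when $h\equiv0$ (constants solve the equation); you must invoke condition $(4)$, which gives $\int_{V}h_n\dif\mu\leq-A^{-1}$ and hence $h_n\not\equiv0$, before concluding that $\int_{V}h_ne^{u_n}\dif\mu=0$ with $h_n\leq0$ is impossible. Second, for $y\notin B$ you use that $u_n(y)$ stays bounded, which does not follow merely from $u_n(y)\not\to+\infty$ (the sequence could be unbounded without diverging); the ``further extraction'' should be made explicit by passing to a subsequence along which $u_n(x)$ converges in $[-\infty,+\infty]$ at each of the finitely many vertices, after which the uniform lower bound from \autoref{thm:alternative} yields boundedness off $B$.
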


\begin{proof}

Assume there is a sequence $u_n\in V^{\mathbb{R}}$ of solutions to
\begin{align*}
-\Delta u_n=h_ne^{u_n}-c_n
\end{align*}
satisfying
\begin{align*}
\lim_{n\to\infty}h_n=h,\quad\lim_{n\to\infty}c_n=c,\quad
\lim_{n\to\infty}\norm{u_n}=\infty.
\end{align*}
Here $h_n$ and $c_n$ satisfy the conditions $(1)-(5)$.

If $u_n$ converges  uniformly to $-\infty$,  then
\begin{align*}
-\Delta\left(u_n-\min_{V}u_n\right)=h_ne^{u_n}-c_n.
\end{align*}
which implies that $u_n-\min_{V}u_n$ is uniformly bounded due to \autoref{lem:elliptic}.
Thus $u_n-\min_{V}u_n$ converges uniformly to a solution $w$ of the equation
\begin{align*}
-\Delta w=-c,\quad\min_{V}w=0
\end{align*}
But this then implies that $c=0$ and $w\equiv0$. By assumptions $(3)$ and $(5)$, we may assume $c_n=0$. Then the assumption $(4)$ gives
\begin{align*}
    \int_{V}h_n\dif\mu\leq -A^{-1}.
\end{align*}
Taking $n\to\infty$, we obtain
\begin{align*}
    \int_{V}h\dif\mu\leq -A^{-1}.
\end{align*}
However,
\begin{align*}
    0=e^{-\min_{V}u_n}\int_Vh_ne^{u_n}\dif\mu=\int_Vh_ne^{u_n-\min_{V}u_n}\dif\mu\to\int_{V}h\dif\mu\leq -A^{-1}
\end{align*}
as $n\to\infty$, which is a contradiction.

Applying \autoref{thm:alternative}, we may assume that $\max_{V}u_n$ converges to $+\infty$, and $u_n$ is uniformly bounded from below in $V$, and $u_n$ is uniformly bounded in $\Omega\coloneqq\set{x\in V: h(x)>0}$, and $\set{x\in V:h(x)=0}\neq\emptyset$. For $n$ large, the assumption $(2)$ gives
\begin{align*}
    \Omega\subseteq\set{x\in V: h_n(x)>0}\subseteq\set{x\in V: h_n(x)\geq A^{-1}}\subseteq \set{x\in V: h(x)\geq A^{-1}}\subseteq\Omega,
\end{align*}
and therefore all the set containment are in fact set equalities.
We have
\begin{align*}
    \int_{V}c_n\dif\mu=&\int_{V}h_ne^{u_n}\dif\mu\\
    =&\int_{\Omega}h_ne^{u_n}\dif\mu+\int_{V\setminus\Omega}h_ne^{u_n}\dif\mu\\
    \leq&C-\int_{V}h_n^-e^{u_n}\dif\mu.
\end{align*}
This implies
\begin{align*}
    \int_{V}h_n^{-}e^{u_n}\dif\mu\leq C.
\end{align*}
Hence
\begin{align*}
    \norm{\Delta u_n}_{L^1\left(V\right)}\leq \int_{V}\abs{h_n}e^{u_n}\dif\mu+C\leq C
\end{align*}
According to \autoref{lem:elliptic}, we know that
\begin{align*}
    \max_{V}u_n\leq\min_{V}u_n+C.
\end{align*}
By the assumption that $\max_{V}u_n$ converges to $+\infty$, we deduce that $u_n$ must converge uniformly to $+\infty$. Consequently, $\Omega=\emptyset$, i.e., $h\leq0$. By assumption $(2)$, we may assume $h_n\leq0$. Thus
\begin{align*}
    \int_{V}c_n\dif\mu=\int_{V}h_ne^{u_n}\dif\mu\leq0,
\end{align*}
with the equality if and only if $h_n\equiv0$. If $c_n=0$, then $h_n=0$, contradicting the assumption (4). Hence $c_n<0$. By $(5)$ we know that
\begin{align*}
    \min_{V}h_n\leq-A^{-1}.
\end{align*}
Hence
\begin{align*}
    -CA\leq\int_{V}c_n\dif\mu=\int_{V}h_ne^{u_n}\dif\mu\leq-CA^{-1}e^{\min_{V}u_n}.
\end{align*}
This implies $\min_{V}u_n\leq C$, which is a contradiction.
\end{proof}

\begin{eg}
For every positive number $\varepsilon$, we have
\begin{align*}
    -\Delta\ln\varepsilon=0=\pm\left(e^{\ln\varepsilon}-\varepsilon\right).
\end{align*}
Thus $u=\ln \epsilon$ is a solution to the equation $-\Delta u=he^u-c$ with $h=\pm 1$ and $c=\pm \epsilon$.
Thus the conditions $(1), (3)$ and the first part of condition $(5)$ are necessary since $\lim\limits_{\varepsilon\to0}\ln\varepsilon=-\infty$ and $\lim\limits_{\varepsilon\to+\infty}\ln\varepsilon=+\infty$.

We also have
\begin{align*}
    -\Delta\left(-\ln\varepsilon\right)=\pm\left(\varepsilon e^{-\ln\varepsilon}-1\right),
\end{align*}
which implies that the conditions $(1), (2)$ and the second part of condition $(5)$ are necessary.

Assume $h\not\equiv0$ and $\int_{V}h\dif\mu<0$. The following Kazdan-Warner equation is solvable (see  \cite{GriLinYan16kazdan} or \autoref{cor:flat})
\begin{align*}
    -\Delta u=he^{u}.
\end{align*}
Let $u$ be a solution to the above equation. Then
\begin{align*}
    -\Delta\left(u-\ln\varepsilon\right)=\varepsilon he^{u-\ln\varepsilon},
\end{align*}
which implies that the conditions $(1)$ and $(4)$ are necessary.
\end{eg}

Now we can prove \autoref{main:a-priori}.
\begin{proof}[Proof of \autoref{main:a-priori}]
For fixed $h$ and $c$, it is easy to check that the conditions $(1)-(5)$ in \autoref{thm:a-priori} hold under  assumptions of \autoref{main:a-priori}. Therefore, by \autoref{main:a-priori} we know that every solution to \eqref{eq:KW} is uniformly bounded by  a positive constant  depending only on $h, c$ and $G$.
\end{proof}

\section{Brouwer degree}\label{sec:degree}

In this section, we prove \autoref{main:degree}. We divided it into three cases: $c>0, c=0$ and $c<0$. These cases correspond to \autoref{thm:positive}, \autoref{thm:flat} and \autoref{thm:negative}, respectively.

Firstly, we compute the Brouwer  degree for the positive case: $c>0$. In other words, we prove the following

\begin{theorem}\label{thm:positive}
For every connected finite graph $G=(V,E)$, function $h$ with $\max_{V}h>0$ and $c>0$,  we have $d_{h,c}=-1$.
\end{theorem}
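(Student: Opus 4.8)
The plan is to avoid counting solutions of $F_{h,c}$ directly and instead compute $d_{h,c}$ by two successive homotopies, the first replacing $h$ by its positive part and the second (after a Lyapunov--Schmidt reduction onto $V_0^{\mathbb R}$) linearizing the equation. I expect the arithmetic of signs in the reduction to be the delicate point: the whole answer $-1$ comes from the single ``mean'' direction being orientation--reversing, while the $(\dim V -1)$ remaining directions contribute $+1$.

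\emph{Step 1: reduction to $h\ge 0$.} Since $\max_V h>0$, consider the path $h_t\coloneqq h+t h^-$, $t\in[0,1]$, so that $h_0=h$ and $h_1=h^+\ge 0$ with $h^+\not\equiv 0$. Along this path $\set{h_t>0}=\set{h>0}$ and the positive values of $h_t$ coincide with those of $h$; hence $h_t$ and $c$ satisfy the hypotheses $(1)$--$(5)$ of \autoref{thm:a-priori} with one constant $A$ independent of $t$. By \autoref{thm:a-priori} all zeros of $F_{h_t,c}$ lie in a fixed ball $B_R$, so $\partial B_R$ carries no zero for any $t$, and the homotopy invariance of the Brouwer degree gives $d_{h,c}=\deg\left(F_{h^+,c},B_R,0\right)$. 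Thus I may assume from now on that $h\ge 0$ and $h\not\equiv 0$.

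\emph{Step 2: Lyapunov--Schmidt reduction.} Write $V^{\mathbb R}=\mathbb R\cdot 1\oplus V_0^{\mathbb R}$ and $u=s+v$ with $s=\left(\int_V u\dif\mu\right)/\left(\int_V 1\dif\mu\right)$ and $v\in V_0^{\mathbb R}$. The mean component of $F_{h,c}$ is
\begin{align*}
F_{\mathrm{mean}}(s,v)=c-e^{s}\,\frac{\int_V h e^{v}\dif\mu}{\int_V 1\dif\mu}.
\end{align*}
Because $h\ge 0$, $h\not\equiv 0$, the integral $\int_V h e^{v}\dif\mu$ is strictly positive, so $F_{\mathrm{mean}}(\cdot,v)$ is strictly decreasing in $s$ with a unique zero $s=s(v)$; in particular $\partial_s F_{\mathrm{mean}}<0$. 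Substituting $s=s(v)$ (equivalently $\int_V he^{u}\dif\mu=c\int_V1\dif\mu$) into the $V_0^{\mathbb R}$--component yields the reduced map $G:V_0^{\mathbb R}\To V_0^{\mathbb R}$,
\begin{align*}
G(v)=-\Delta v-c\left(\frac{\left(\int_V 1\dif\mu\right) h e^{v}}{\int_V h e^{v}\dif\mu}-1\right),
\end{align*}
whose right--hand side indeed has zero mean. The zeros of $F_{h,c}$ are exactly the $s(v)+v$ with $G(v)=0$; since this zero set is bounded by \autoref{main:a-priori}, the standard decoupling homotopy together with the fact that $F_{\mathrm{mean}}(\cdot,v)$ is orientation--reversing gives $\deg\left(F_{h,c},B_R,0\right)=-\deg\left(G,B_R\cap V_0^{\mathbb R},0\right)$.

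\emph{Step 3: linearizing homotopy for $G$.} Consider $G_t(v)=-\Delta v-tc\bigl(\tfrac{(\int_V 1\dif\mu)h e^{v}}{\int_V h e^{v}\dif\mu}-1\bigr)$, $t\in[0,1]$. The nonlinear factor is uniformly bounded, namely $0\le \tfrac{(\int_V 1\dif\mu)h(x) e^{v(x)}}{\int_V h e^{v}\dif\mu}\le \max_{x}\tfrac{\int_V 1\dif\mu}{\mu_x}$, so $\max_V\abs{\Delta v}\le C$ for every zero of $G_t$; by \autoref{lem:elliptic} and $v\in V_0^{\mathbb R}$ this forces a $t$--independent bound on $v$, and the homotopy is admissible. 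Hence $\deg(G)=\deg(G_0)=\deg\left(-\Delta|_{V_0^{\mathbb R}}\right)$, and since $-\Delta$ is symmetric and positive definite on $V_0^{\mathbb R}$ its determinant there is positive, giving $\deg(G)=1$. Combining with Step 2, $d_{h,c}=-1$.

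The main obstacle is the correct sign bookkeeping in Step 2: one must check that it is $F_{\mathrm{mean}}$ (not its negative) that enters the degree, so that the monotone mean direction contributes the factor $-1$ rather than $+1$; a sanity check on a two--vertex graph, where the unique solution has Jacobian determinant $-2c<0$, confirms the orientation. The remaining care is purely in verifying admissibility of the two homotopies, for which \autoref{thm:a-priori}, \autoref{main:a-priori} and \autoref{lem:elliptic} are exactly the required tools.
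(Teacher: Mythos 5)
Your proof is correct, and after the common first homotopy (deforming $h$ to $h^{+}$, which is exactly the paper's first step) it takes a genuinely different route. The paper continues by shrinking $c$ to a small $\varepsilon$, eliminating the vertices where $h$ vanishes via a Schur-complement construction that produces a smaller connected graph with the same degree (\autoref{rem:dim=1}, \autoref{rem:R-positive}), homotoping to $h\equiv1$, $\mu\equiv1$, proving that $u=\ln\varepsilon$ is the \emph{unique} solution of $-\Delta u=e^{u}-\varepsilon$ for small $\varepsilon$, and reading off $d_{h,c}=\mathrm{sgn}\det\left(-\Delta-\varepsilon\,\mathrm{Id}\right)=-1$. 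You instead isolate the constant direction by a Lyapunov--Schmidt splitting: the mean equation is strictly decreasing in $s$ (this is where the $-1$ comes from), and the reduced map on $V_0^{\mathbb{R}}$ is a mean-field operator whose nonlinearity is uniformly bounded by $\max_{x}\left(\int_{V}1\dif\mu\right)/\mu_{x}$ once $h\geq0$, $h\not\equiv0$; that bound is the crucial observation, since it makes the homotopy $tc\searrow0$ admissible via \autoref{lem:elliptic} and lands you on $-\Delta\vert_{V_0^{\mathbb{R}}}$, which has degree $+1$. What your route buys: it avoids the paper's matrix reduction (and the Berman--Plemmons machinery behind it) as well as the small-$\varepsilon$ uniqueness argument, it works for fixed $c$ and for $h\geq0$ with arbitrary zero set, and it makes transparent the analogy with the surface case --- your reduced degree $+1$ is the graph analogue of Chen--Lin's $d_{\rho}=1$ for $\rho\in(0,8\pi)$, the full Kazdan--Warner equation differing from the mean field equation by exactly one orientation-reversing direction. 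What the paper's route buys: the entire computation is localized at one explicit nondegenerate solution, so no product or reduction formula for the degree is needed, and the Schur-complement graph reduction is of independent interest. The one place you should add detail is the ``standard decoupling homotopy'' in Step 2, which you assert rather than carry out: one first deforms the $V_0^{\mathbb{R}}$-component of $F_{h,c}$ (in the coordinates $u=s+v$) to $G(v)$, noting that the zero set $\set{\left(s(v),v\right):G(v)=0}$ is unchanged because the mean equation pins $s=s(v)$; one then deforms the mean component to $-s$ and applies the product formula together with excision. All zero sets involved stay in a fixed bounded set by \autoref{main:a-priori}, so each homotopy is admissible, and the sign bookkeeping you worried about is indeed correct: both proofs extract the $-1$ from the same source, appearing in the paper as the lone negative eigenvalue $-\varepsilon$ of $-\Delta-\varepsilon\,\mathrm{Id}$ and in yours as the monotone mean direction.
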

\begin{proof}Let $u_t\in V^{\mathbb{R}}$ satisfy
\begin{align*}
    -\Delta u_t=\left(h^+-(1-t)h^{-}\right)e^{u_t}-(1-t)c-t\varepsilon,\quad t\in[0,1],
\end{align*}
where $\varepsilon>0$ is small to be determined.
According to  \autoref{thm:a-priori}, $u_t$ is uniformly bounded.  By the homotopic invariance  of the Brouwer degree, we may assume $h^-\equiv0$ and $c=\varepsilon>0$.

To compute the Brouwer degree $d_{h,f}$, we may assume $h$ vanishes nowhere in $V$. In fact,  without loss of generality, we may assume $\mu\equiv1$ and $V=\set{1,2,\dotsc, m}$. In this case $L\coloneqq-\Delta=\left(l_{ij}\right)_{m\times m}$ is a symmetric matrix which can be characterized by
\begin{align*}
\begin{cases}
l_{ij}\leq0,\quad \forall i\neq j,\\
\sum_{j}l_{ij}=0,\quad\forall i,
\end{cases}
\end{align*}
and $\dim\ker L=1$ (see \autoref{rem:dim=1}). Since $L$ is a symmetric diagonally dominant real matrix with nonnegative diagonal entries, we know that $L$ is positive semi-definite. This can also be seen from Green's formula \eqref{Green}.

Now the Kazdan-Warner equation \eqref{eq:KW-hf} is equivalent to
\begin{align*}
L\begin{pmatrix}x_1\\
x_2\\
\vdots\\
x_m
\end{pmatrix}=\begin{pmatrix}h_1e^{x_1}-f_1\\
h_2e^{x_2}-f_2\\
\vdots\\
h_me^{x_m}-f_m
\end{pmatrix},
\end{align*}
where $h_i=h(i), f_i=f(i)$.
We also assume $h_1\neq0,\dotsc, h_{r}\neq0, h_{r+1}=\dotsm=h_{m}=0$ and $1\leq r\leq m$. Write
\begin{align*}
L=\begin{pmatrix}P&Q^T\\
Q&R
\end{pmatrix},
\end{align*}
where $P$ is a $r\times r$ matrix.  If $r<m$, then $R$ is positive definite (see \autoref{rem:R-positive} for details). Now \eqref{eq:KW-hf} is equivalent to
\begin{align*}
\left(P-Q^TR^{-1}Q\right)\begin{pmatrix}x_1\\
x_2\\
\vdots\\
x_r
\end{pmatrix}=\begin{pmatrix}h_1e^{x_1}-\tilde f_1\\
h_2e^{x_2}-\tilde f_2\\
\vdots\\
h_re^{x_r}-\tilde f_r
\end{pmatrix},
\end{align*}
where
\begin{align*}
\begin{pmatrix}\tilde f_1\\
\tilde f_2\\
\vdots\\
\tilde f_r
\end{pmatrix}=\begin{pmatrix}f_1\\
f_2\\
\vdots\\
f_r
\end{pmatrix}-Q^TR^{-1}\begin{pmatrix}f_{r+1}\\
f_{r+2}\\
\vdots\\
f_m
\end{pmatrix}.
\end{align*}
One can check that $R^{-1}=\left(r^{ij}\right)$ satisfies $r^{ij}\geq0$ (see \cite[p.\ 137]{Berman}) and $ \tilde L\coloneqq P-Q^{T}R^{-1}Q=\left(\tilde l_{ij}\right)$  satisfies
\begin{align*}
\begin{cases}
\tilde l_{ji}=\tilde l_{ij}\leq0,\quad\forall i\neq j,\\
\sum_{j}\tilde l_{ij}=0,\quad\forall i,
\end{cases}
\end{align*}
and $\dim\ker\tilde L=1$. Thus, we can construct a connected finite graph $\tilde G=(\tilde V,\tilde E)$ with vertex $\tilde V=\set{1,2,\dotsc,r}$, weight $\tilde\omega_{ij}=-\tilde l_{ij}$, measure $\tilde\mu\equiv1$ and $\tilde L=-\tilde\Delta$.  Moreover,
\begin{align*}
\sum_{i=1}^r\tilde f_i=\sum_{i=1}^mf_i,
\end{align*}
and
\begin{align*}
    \det\left(L-\mathrm{diag}\left(h_1e^{x_1},\dotsc,h_me^{x_m}\right)\right)=\det R \cdot \det\left(\tilde L-\mathrm{diag}\left(h_1e^{x_1},\dotsc,h_re^{x_r}\right)\right).
\end{align*}
We conclude that
\begin{align*}
    d_{h,f}=d_{h\vert_{\tilde V}, \tilde f}.
\end{align*}

Applying \autoref{thm:a-priori} again, by the homotopic invariance  of the Brouwer degree, we may assume $h\equiv1$ and $\mu\equiv1$.

We consider
\begin{align}\label{eq:kze}
-\Delta u(x)=e^{u(x)}-\varepsilon,\quad x\in V.
\end{align}
Notice that
\begin{align*}
\int_{V}e^{u}\dif\mu=\varepsilon\int_{V}1\dif\mu.
\end{align*}
We obtain
\begin{align*}
e^{\max_{V}u}\leq C\varepsilon.
\end{align*}
If $w$ solves
\begin{align*}
-\Delta w=e^{w}-\varepsilon,
\end{align*}
then
\begin{align*}
-\Delta(u-w)=e^{u}-e^{w},
\end{align*}
which implies
\begin{align*}
\abs{\Delta(u-w)}\leq \max\set{e^{u},e^{w}}\abs{u-w}\leq C\varepsilon\abs{u-w}.
\end{align*}
Thus for small $\varepsilon>0$, applying \autoref{lem:elliptic}, we have
\begin{align*}
\max_{V}\left(u-w\right)\leq \min_{V}\left(u-w\right)+\dfrac12\abs{\min_{V}\left(u-w\right)}.
\end{align*}
If $u\not\equiv w$, then  $e^{u}-e^{w}\not\equiv0$. Since
\begin{align*}
    \int_{V}\left(e^{u}-e^{w}\right)\dif\mu=0,
\end{align*}
we must have
\begin{align*}
    \min_{V}\left(u-w\right)<0<\max_{V}\left(u-w\right).
\end{align*}
Hence
\begin{align*}
0<\max_{V}\left(u-w\right)\leq\min_{V}\left(u-w\right)-\dfrac12\min_{V}\left(u-w\right)=\dfrac12\min_{V}\left(u-w\right)<0,
\end{align*}
which is a contradiction. Therefore, the Kazdan-Warner equation \eqref{eq:kze} has a unique solution $u=\ln\varepsilon$ if $\varepsilon>0$ is small.

Note that $-\Delta$ is a nonnegative matrix and $0$ is an eigenvalue of $-\Delta$ with multiplicity one. We have for small $\varepsilon>0$,
\begin{align*}
\det\left(DF_{1,\varepsilon}\left(\ln\varepsilon\right)\right)=\det\left(-\Delta-\varepsilon\mathrm{Id}\right)<0,
\end{align*}
Consequently, by the homotopy invariance of the Brouwer degree,
\begin{align*}
d_{h,c}=\lim_{\varepsilon\searrow 0}d_{1,\varepsilon}=\mathrm{sgn}\det\left(DF_{1,\varepsilon}\left(\ln\varepsilon\right)\right)=-1
\end{align*}
and the proof is finished.
\end{proof}
\begin{rem}\label{rem:dim=1}
The condition $\dim \ker L=1$ is equivalent to that the graph $G=(V,E)$ is connected, i.e., has only one connected component. This further guarntees that no rows of $L$ can be zero and therefore $l_{ii}>0$ for each $i$.
\end{rem}

\begin{rem}\label{rem:R-positive}
Here we give details explaining why $R$ is positive definite. First, since $L$ is positive semi-definite, we know that $R$ is also positive semi-definite. Therefore, all the eigenvalues of $R$ are nonnegative. If $R$ is not positive, then it has an eigenvalue 0. Let $Y$ be an eigenvector for the eigenvalue 0. For any column vector $X\in \mathbb{R}^{m-r}$, let $Z=\begin{pmatrix}
X \\ Y
\end{pmatrix}$. We have
\begin{align*}
Z^\mathrm{T}LZ=(X^\mathrm{T},Y^\mathrm{T})\begin{pmatrix}
P & Q^\mathrm{T} \\
Q & R
\end{pmatrix}\begin{pmatrix}
X \\ Y
\end{pmatrix}=X^\mathrm{T}PX+2X^\mathrm{T}Q^\mathrm{T}Y.
\end{align*}
Since $L$ is positive semi-definite, we have $X^\mathrm{T}PX+2X^\mathrm{T}Q^TY\geq 0$ for any $X\in \mathbb{R}^{m-r}$. Therefore, we must have $Q^\mathrm{T}Y=0$. Note that
\begin{align*}
LZ=\begin{pmatrix}
PX \\ QX
\end{pmatrix}.
\end{align*}
It turns out that when $Y\neq 0$, $\left(0,Y^{\mathrm{T}}\right)^\mathrm{T}$ is an eigenvector of $L$ of the eigenvalue 0. However, by the definition of $L$, we know that  $(1,1,\cdots, 1)^\mathrm{T}$ is an eigenvector of $L$ corresponding to the eigenvalue 0. Since we have assumed that $\dim\ker L=1$, we have
\begin{align*}
\ker L=\mathrm{span}(1,1,\cdots, 1)^\mathrm{T}.
\end{align*}
This implies that $Y=0$, which contradicts the assumption that $Y$ is an eigenvector of $R$.
\end{rem}

Secondly, we compute the Brouwer degree for the flat case: $c=0$. We prove the following
\begin{theorem}\label{thm:flat}
  For every connected finite graph $G=(V,E)$ and sign changed function $h\in V^{\mathbb{R}}$ with $  \int_{V}h\dif\mu<0$, we have
 \begin{align*}
 d_{h,0}=-1.
 \end{align*}
\end{theorem}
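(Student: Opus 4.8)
The plan is to settle the flat case by deforming the constant, reducing it to the positive case that \autoref{thm:positive} already covers. Fix a small $\varepsilon>0$ and consider the family
\begin{align*}
F_t(u)=-\Delta u+t\varepsilon-he^{u},\qquad t\in[0,1],
\end{align*}
whose zeros are exactly the solutions of $-\Delta u=he^{u}-t\varepsilon$. Since $V$ is finite, $L^{\infty}(V)=V^{\mathbb{R}}$ is finite-dimensional and $\set{F_t}_{t\in[0,1]}$ is a continuous family of maps on it, so once I exhibit a radius $R$ beyond which no $F_t$ has a zero on $\partial B_R$, homotopy invariance of the Brouwer degree gives $d_{h,0}=\deg\left(F_0,B_R,0\right)=\deg\left(F_1,B_R,0\right)=d_{h,\varepsilon}$. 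Because $h$ changes sign we have $\max_V h>0$, so $d_{h,\varepsilon}=-1$ by \autoref{thm:positive}, and the theorem follows. The whole content of the proof is therefore the uniform a priori bound along this homotopy.

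Here I cannot simply invoke \autoref{thm:a-priori}, since its condition $(3)$ fails uniformly as $c=t\varepsilon\searrow0$; instead I argue directly from the Brezis--Merle alternative. Suppose there were solutions $u_n$ of $-\Delta u_n=he^{u_n}-c_n$ with $c_n=t_n\varepsilon\in[0,\varepsilon]$ and $\norm{u_n}\to\infty$. Passing to a subsequence I may assume $c_n\to c_*\in[0,\varepsilon]$ and apply \autoref{thm:alternative} with $h_n\equiv h$. Alternative $(1)$ contradicts $\norm{u_n}\to\infty$, so it remains to exclude alternatives $(2)$ and $(3)$; each is ruled out by exactly one of the two hypotheses, which I expect to be the delicate point.

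To exclude downward blow-up (alternative $(2)$), write $m_n=\min_V u_n\to-\infty$ and $v_n=u_n-m_n\ge0$. Since $e^{u_n}\to0$ uniformly, $\Delta v_n=\Delta u_n$ stays bounded, so \autoref{lem:elliptic} keeps $v_n$ bounded; a subsequence converges to some $w$ with $-\Delta w=-c_*$ and $\min_V w=0$, which upon integration forces $c_*=0$ and then $w\equiv0$. Integrating the equation gives $\int_V he^{u_n}\dif\mu=c_n\int_V 1\dif\mu$; multiplying by $e^{-m_n}>0$ and letting $n\to\infty$ turns this into $\int_V h\dif\mu=\lim_n c_n e^{-m_n}\int_V 1\dif\mu\ge0$, contradicting $\int_V h\dif\mu<0$.

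To exclude upward blow-up (alternative $(3)$), I use that $h$ changes sign, so $\Omega\coloneqq\set{x\in V:h(x)>0}\neq\emptyset$; by \autoref{thm:alternative} each $u_n$ is bounded below on $V$ and bounded above on $\Omega$. Integrating gives $\int_V he^{u_n}\dif\mu=c_n\int_V 1\dif\mu$, and splitting the left side over $\Omega$ and $V\setminus\Omega$ (where $h\le0$), together with $c_n\ge0$, bounds $\int_V h^{-}e^{u_n}\dif\mu$ and hence $\norm{\Delta u_n}_{L^1(V)}\le C$. By \autoref{lem:elliptic} and the equivalence of norms on the finite graph, $\max_V u_n\le\min_V u_n+C$; since $\max_V u_n\to+\infty$ this forces $\min_V u_n\to+\infty$, i.e.\ $u_n\to+\infty$ uniformly, which is incompatible with the upper bound on the nonempty set $\Omega$. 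With both blow-up scenarios excluded, the uniform bound holds and the homotopy argument closes. The main obstacle is precisely this estimate near $c=0$: the sign hypothesis $\int_V h\dif\mu<0$ is what kills alternative $(2)$, while the sign-change hypothesis (guaranteeing $\Omega\neq\emptyset$) is what kills alternative $(3)$, and neither can substitute for the other.
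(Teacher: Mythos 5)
Your proof is correct and takes essentially the same route as the paper: a homotopy in the constant $c$ from $0$ to a positive value, invoking \autoref{thm:positive} at the endpoint, with the uniform a priori bound near $c=0$ established through \autoref{thm:alternative} — downward blow-up excluded by $\int_V h\dif\mu<0$ and upward blow-up excluded by the sign change of $h$. The differences (deforming to $c=\varepsilon$ rather than $c=1$, and phrasing the estimate as a sequential compactness argument) are purely cosmetic.
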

\begin{proof}
Let $u_{t}\in V^{\mathbb{R}}$ be a solution to
\begin{align*}
    -\Delta u_{t}=he^{u_{t}}-t,\quad  t\in[0,1].
\end{align*}
We claim that there exists a positive constant $C$ such that
\begin{align}\label{eq:apriori-u_t}
    \max_{V}\abs{u_{t}}\leq C,\quad\forall t\in[0,1].
\end{align}
It suffices to prove
\begin{align}\label{add-ineq}
    \limsup_{t\searrow0}\max_{V}\abs{u_{t}}\leq C.
\end{align}
We prove it by contradiction. Suppose \eqref{add-ineq} is not true. According to \autoref{thm:alternative}, after passing to a subsequence, there are two cases:
\begin{enumerate}
    \item $u_{t}$ converges uniformly to $-\infty$, or
    \item $u_{t}$ is uniformly bounded from below in $V$ and  uniformly bounded in $\set{x\in V:h(x)>0}\neq\emptyset$.
\end{enumerate}

For the first case,  arguing similarly as in the beginning of the proof of \autoref{thm:a-priori}, we know that $u_{t}-\min_{V}u_{t}$ converges uniformly to $0$. We conclude that
\begin{align*}
    0>\int_{V}h\dif\mu=\lim_{t\searrow0}\int_{V}he^{u_{t}-\min_{V}u_{t}}\dif\mu=\lim_{t\searrow0}\int_{V}t e^{-\min_{V}u_{t}}\dif\mu\geq0,
\end{align*}
which is a contradiction.

For the second case, we have
\begin{align*}
    \int_{V}\abs{h}e^{u_{t}}\dif\mu=&\int_{\set{x\in V:h(x)>0}}he^{u_{t}}\dif\mu-\int_{\set{x\in V:h(x)<0}}he^{u_{t}}\dif\mu\\
    =&2\int_{\set{x\in V:h(x)>0}}he^{u_{t}}\dif\mu-\int_{V}he^{u_{t}}\dif\mu\\
    =&2\int_{\set{x\in V:h(x)>0}}he^{u_{t}}\dif\mu-\int_{V}t\dif\mu\\
    \leq&C.
\end{align*}
This implies
\begin{align*}
    \min_{V}u_{t}\leq C.
\end{align*}
Moreover, it also implies
\begin{align*}
\max_{V}|\Delta u_{t}|\leq \max_{V} |h|e^{u_t} +1  \leq C.
\end{align*}
Applying  \autoref{lem:elliptic}, we have
\begin{align*}
    \max_{V}u_{t}\leq\min_{V}u_{t}+C\max_{V}|\Delta u|\leq\min_{V}u_{t}+C\leq C,
\end{align*}
which is a contradiction.

The a priori estimate \eqref{eq:apriori-u_t}  implies
\begin{align*}
    d_{h,0}=\lim_{t\searrow0}d_{h,t}=-1.
\end{align*}
Here we used the homotopic invariance of the Brouwer degree and \autoref{thm:positive}.
\end{proof}

 Finally, we compute the Brouwer degree for the negative case: $c<0$. We prove the following
\begin{theorem}\label{thm:negative}
 For every connected finite graph $G=(V,E)$,  function $h$ with $\min_{V}h<0$ and $c<0$, we have
 \begin{align*}
 d_{h,c}=\begin{cases}
 1,&\max_{V}h\leq0,\\
 0,&\max_{V}h>0.
 \end{cases}
 \end{align*}
\end{theorem}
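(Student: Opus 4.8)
The plan is to treat the two regimes $\max_{V}h\le0$ and $\max_{V}h>0$ separately. In each I would deform the data along a path that keeps the hypotheses of \autoref{thm:a-priori} valid with a single constant $A$, so that the whole solution set stays in a fixed ball and the Brouwer degree is unchanged under the deformation; the degree is then read off at a convenient endpoint.

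For the first case $\max_{V}h\le0$ (so $h\le0$ and $h\not\equiv0$) I would deform $h$ to the constant $-1$ along $h_t=(1-t)h-t$, $t\in[0,1]$, keeping $c$ fixed. Since $h_t<0$ everywhere for $t>0$ while $h_0=h\le0$ is not identically zero, the hypotheses of \autoref{thm:a-priori} hold with one constant $A$ along the homotopy, so $d_{h,c}=d_{-1,c}$. For $h\equiv-1$ the equation is $-\Delta u+e^{u}=-c$, which has the explicit constant solution $u\equiv\ln(-c)$. Positive definiteness of $DF_{-1,c}(u)=-\Delta+\mathrm{diag}(e^{u})$, the sum of a positive semidefinite matrix and a positive definite one, makes $J_{-1,c}$ strictly convex, so this is the unique solution and it is nondegenerate; hence $d_{-1,c}=\mathrm{sgn}\det(-\Delta-c\,\mathrm{Id})=1$, because $-\Delta$ is positive semidefinite and $-c>0$.

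For the second case $\max_{V}h>0$, the plan is to prove that the equation has no solution once $c$ is sufficiently negative and to combine this with the fact that $d_{h,c}$ is constant in $c<0$: for any $c_1<c_2<0$ the hypotheses of \autoref{thm:a-priori} hold with one $A$ on $[c_1,c_2]$, so homotopy invariance gives $d_{h,c_1}=d_{h,c_2}$, and $(-\infty,0)$ being connected makes $d_{h,c}$ independent of $c<0$. Evaluating it at a very negative $c$ then gives $0$. For the nonexistence, writing the equation as $-\Delta u=he^{u}+|c|$ and pairing with $e^{-u}$, Green's formula \eqref{Green} and the monotonicity of $t\mapsto e^{-t}$ give $\int_V\Gamma(u,e^{-u})\dif\mu\le0$, whence $|c|\int_V e^{-u}\dif\mu\le-\int_V h\dif\mu$. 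This already forces $\int_V h\dif\mu<0$, and together with $\int_V e^{-u}\dif\mu\ge\mu_{\min}e^{-\min_V u}$ (where $\mu_{\min}=\min_{x\in V}\mu_x$) it yields $\min_V u\ge\ln\bigl(|c|\mu_{\min}/(-\int_V h\dif\mu)\bigr)\to+\infty$; in particular $u>0$ everywhere once $|c|$ is large. Let $\tilde x$ maximise $u$ over $\Omega^+=\{x:h(x)>0\}$, with value $\tilde M$. At $\tilde x$ the identity $-\Delta u(\tilde x)=h(\tilde x)e^{\tilde M}+|c|$ and the bound $-\Delta u(\tilde x)\le\tilde M\,\mu_{\tilde x}^{-1}\sum_y\omega_{\tilde x y}$ (from $u\ge0$) force $\tilde M e^{-\tilde M}\ge(\min_{\Omega^+}h)/\max_x(\mu_x^{-1}\sum_y\omega_{xy})>0$; since $\tilde M\ge\min_V u\to+\infty$, this is impossible for $c$ sufficiently negative.

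The main obstacle is this nonexistence argument. Because the oscillation of $u$ is uncontrolled (it can grow like $e^{\min_V u}$ through the coupling of $\{h<0\}$ to $\{h>0\}$), no Harnack-type bound is available, so one cannot simply bound $\max_V u$; instead one must combine the global lower bound $\min_V u\to+\infty$ extracted from the $e^{-u}$ pairing with the purely local, super-linear-versus-exponential comparison at the maximiser of $u$ over $\Omega^+$. A secondary, routine point is to confirm that the a priori constant of \autoref{thm:a-priori} stays uniform along both homotopies, which is what legitimises the invariance steps.
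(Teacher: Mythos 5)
Your proof is correct, and in the case $\max_V h>0$ it takes a genuinely different route from the paper's. For $\max_V h\le 0$ you and the paper do essentially the same thing: the homotopy $h_t=(1-t)h-t$ down to $h\equiv-1$ (justified by \autoref{thm:a-priori}), then evaluation at the constant solution $\ln(-c)$; the only difference is that the paper proves uniqueness via the strong maximum principle (\autoref{lem:maximum}), whereas you get uniqueness and nondegeneracy at one stroke from strict convexity of $J_{-1,c}$ --- an equally valid, slightly tidier variant. For $\max_V h>0$, the paper fixes $c$ and deforms $h$ to the function $h_\Lambda$ equal to $\Lambda$ on $\set{h>0}$ and to $-1$ elsewhere, with $\Lambda$ so large that $\int_V h_\Lambda\dif\mu>0$; since pairing the equation with $e^{-u}$ shows that $\int_V h\dif\mu<0$ is necessary for solvability when $c<0$, the deformed problem has no solution for purely soft reasons, so its degree is $0$. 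You instead fix $h$, use constancy of $d_{h,c}$ in $c<0$, and prove nonexistence for $c\ll0$ quantitatively: the same $e^{-u}$ pairing gives $\min_V u\ge\ln\bigl(\abs{c}\min_{x\in V}\mu_x/(-\int_V h\dif\mu)\bigr)\to+\infty$, while at any vertex with $h(x)>0$ the equation together with $u\ge0$ forces $u(x)e^{-u(x)}$ to exceed the fixed positive constant $\min_{\set{h>0}}h\cdot\bigl(\max_{x\in V}\mu_x^{-1}\sum_{y\in V}\omega_{xy}\bigr)^{-1}$, which is impossible once $\min_V u$ is large. This nonexistence step is in substance an elementary re-derivation of Ge's bound $c_h>-\infty$ (cf. \eqref{eq:c_h}), which the paper obtains in Section \ref{sec:existence} by the more delicate comparison of $e^{-u_c}$ with the solution of $\left(\Delta+c\right)\xi_c=h$. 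What each buys: the paper's deformation requires no estimates beyond the sign condition $\int_V h\dif\mu<0$, while yours never leaves the given $h$ and produces, as a by-product, an explicit lower bound on the threshold $c_h$ that could be recycled in the proof of \autoref{cor:-existence-negative}. Your two homotopy-invariance steps are legitimate, since along each deformation the hypotheses of \autoref{thm:a-priori} hold with a single constant $A$, as you verified.
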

\begin{proof}
Since $c<0$, a necessary condition for the existence to the Kazdan-Warner equation \eqref{eq:KW} is
\begin{align*}
\int_{V}h\dif\mu<0.
\end{align*}
This was proved by Grigor'yan, Lin and Yang \cite[Theorem 3]{GriLinYan16kazdan}. For the sake of completeness, we reproduce its proof here.
In fact, if $u$ solves \eqref{eq:KW}, then
\begin{equation}\label{add-use-Green}
\begin{split}
\int_{V}h\dif\mu=&\int_{V}ce^{-u}\dif\mu-\int_{V}e^{-u}\Delta u\dif\mu \\
<&-\int_{V}e^{-u}\Delta u\dif\mu \quad (\text{since $c<0$}) \\
=&\dfrac12\sum_{x,y\in V}\omega_{x,y}\left(u(x)-u(y)\right)\left(e^{-u(x)}-e^{-u(y)}\right) \quad (\text{by \eqref{Green}}) \\
\leq&0.
\end{split}
\end{equation}

First we assume $\max_{V}h\leq0$. Let $u_t\in V^{\mathbb{R}}$ satisfy
\begin{align*}
    -\Delta u_t=\left(-t+(1-t)h\right)e^{u_t}-c,\quad t\in[0,1],
\end{align*}
According to \autoref{thm:alternative}, $u_t$ is uniformly bounded. By the homotopy invariance of the Brouwer degree, we may assume $h\equiv-1$ and $\mu\equiv1$. We claim that $u\equiv\ln(-c)$ is the unique solution to
\begin{align*}
-\Delta u=-e^{u}-c.
\end{align*}
It suffices to prove that $u$ is a constant function. For otherwise, applying \autoref{lem:maximum}, we have
\begin{align*}
-e^{\max_{V}u}-c>0,\quad -e^{\min_{V}u}-c<0
\end{align*}
which is a contradiction. Thus
\begin{align*}
d_{h,c}=d_{-1,c}=\mathrm{sgn}\det\left(DF_{-1,c}\left(\ln(-c)\right)\right)=\mathrm{sgn}\det\left(-\Delta-c\mathrm{Id}\right)=1.
\end{align*}

Second, we assume $\max_{V}h>0$. Let $v_t\in V^{\mathbb{R}}$ satisfy
\begin{align*}
    -\Delta v_t=\left(th_{\Lambda}+(1-t)h\right)e^{v_t}-c,\quad t\in[0,1],
\end{align*}
where
\begin{align*}
    h_{\Lambda}(x)=\begin{cases}
    \Lambda, & h(x)>0,\\
    -1, & h(x)\leq0,
    \end{cases}
\end{align*}
 and $\Lambda>0$ is large to be determined.
According to \autoref{thm:alternative}, $v_t$ is uniformly bounded.  By the homotopy invariance of the Brouwer degree, without loss of generality, assume $h\equiv h_{\Lambda}$.  Choose $\Lambda$ large such that
\begin{align*}
\int_{V}h_{\Lambda}\dif\mu=\Lambda\int_{\set{x\in V : h(x)>0}}1\dif\mu-\int_{\set{x\in V : h(x)\leq0}}1\dif\mu>0.
\end{align*}
Consequently, there is no solution if $\Lambda$ is large. Thus, according to \autoref{thm:alternative}, by the homotopy invariance and Kronecker existence of the Brouwer degree,
\begin{align*}
d_{h,c}=\lim_{\Lambda\to+\infty}d_{h_{\Lambda},c}=0. 
\end{align*}
We finish the proof.
\end{proof}

\section{Existence results}\label{sec:existence}

As consequences of the degree theory, we state and prove several existence results in the literature. Specifically speaking, we give  proofs of  \autoref{main:cor1}  and \autoref{main:cor2}.
While some proofs and techniques are borrowed from the literature, the main new ingredient in our proofs  is that we apply the degree theory to analyze the existence of solutions.

\begin{cor}[Solvability for positive case]\label{cor:positive}
If $c>0$, then \eqref{eq:KW} is solvable if and only if $\max_{V}h>0$.
\end{cor}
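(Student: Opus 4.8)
The plan is to prove the two implications separately, with the substantive direction resting entirely on the degree computation already carried out in \autoref{thm:positive}, so that the corollary itself requires no new a priori estimate.

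For the necessity ($\Rightarrow$), I would integrate \eqref{eq:KW} over $V$. Testing the equation against the constant function $1$ and invoking the Green formula \eqref{Green} — equivalently, using that $\int_V \Delta u \dif\mu = 0$, which follows at once from the antisymmetry $\omega_{xy}=\omega_{yx}$ in \eqref{mu-Laplace} — I obtain
\begin{align*}
\int_V h e^{u} \dif\mu = c\int_V 1 \dif\mu.
\end{align*}
Since $c>0$ and $\int_V 1\dif\mu>0$, the right-hand side is strictly positive, hence so is $\int_V h e^{u}\dif\mu$. Because $e^{u}>0$ at every vertex, this forces $h(x)>0$ for some $x\in V$, i.e.\ $\max_V h>0$. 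This step is elementary and is the only part of the argument that is genuinely self-contained.

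For the sufficiency ($\Leftarrow$), I would assume $\max_V h>0$ together with the standing hypothesis $c>0$. Then $h$ and $c$ fulfil condition $1)$ of \autoref{main:a-priori}, so that theorem applies: every solution is uniformly bounded, there is no solution on $\partial B_R$ for $R$ large, and the Brouwer degree $d_{h,c}$ is well defined and independent of $R$. By \autoref{thm:positive} we have $d_{h,c}=-1\neq 0$. The Kronecker existence property of the Brouwer degree then yields at least one $u$ with $F_{h,c}(u)=0$ in $B_R$, that is, a solution to \eqref{eq:KW}.

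Thus the real content is discharged upstream by \autoref{thm:positive}, and the corollary is essentially a bookkeeping combination of that degree formula with the integrated identity above. I do not anticipate a serious obstacle here; the only point deserving a line of care is the necessity direction, where one must be explicit that the strict positivity of $c$ — combined with the pointwise positivity of $e^{u}$ — is exactly what pins down the sign of $h$ somewhere.
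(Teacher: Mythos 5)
Your proposal is correct and follows essentially the same route as the paper: the necessity direction by integrating \eqref{eq:KW} over $V$ to get $\int_V he^u\dif\mu=c\int_V 1\dif\mu>0$ and using $e^u>0$, and the sufficiency direction by citing the degree computation $d_{h,c}=-1$ (the paper cites \autoref{main:degree}, you cite its positive-case component \autoref{thm:positive}) together with Kronecker existence. The only nit: the identity $\int_V\Delta u\dif\mu=0$ follows from the \emph{symmetry} $\omega_{xy}=\omega_{yx}$, not ``antisymmetry'' as you wrote.
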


\begin{proof}If there is a solution $u$, since $\int_V he^u \dif\mu=\int_V c\dif\mu>0$, we must have $\max_{V}h>0$.  Under the assumption $\max_{V}h>0$, by \autoref{main:degree} we have $d_{h,c}=-1$. In particular, \eqref{eq:KW} has at least one solution.
\end{proof}

\begin{cor}[Solvability for flat case]\label{cor:flat} If $c=0$ and $h\not\equiv0$, then \eqref{eq:KW} is solvable if and only if $h$ changes sign and $\int_{V}h \dif \mu<0$.
\end{cor}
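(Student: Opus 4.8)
The plan is to treat the two implications separately, and the sufficiency direction is essentially free. Under the hypotheses that $h$ changes sign and $\int_{V}h\dif\mu<0$, the pair $(h,0)$ satisfies condition $2)$ of \autoref{main:a-priori}, so there are no solutions on $\partial B_{R}$ for $R$ large and the Brouwer degree $d_{h,0}$ is well defined. \autoref{thm:flat} then gives $d_{h,0}=-1\neq0$, and the Kronecker existence property of the Brouwer degree yields at least one solution to \eqref{eq:KW}. Thus the real content lies in the necessity direction, which I would establish by two integral identities.

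First I would integrate the equation. If $u$ solves $-\Delta u=he^{u}$, then integrating over $V$ and using the Green formula \eqref{Green} with $v\equiv1$ (so that $\int_{V}\Delta u\dif\mu=0$) gives
\begin{align*}
\int_{V}he^{u}\dif\mu=0.
\end{align*}
Since $e^{u}>0$ everywhere and $h\not\equiv0$, the function $he^{u}$ cannot keep a constant sign; if $h\geq0$ or $h\leq0$ on all of $V$ with $h\not\equiv0$, the integral above would be strictly positive or strictly negative. Hence $h$ must change sign.

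Next I would extract the sign of $\int_{V}h\dif\mu$ by testing against $e^{-u}$, exactly as in the computation \eqref{add-use-Green} in the proof of \autoref{thm:negative}, but now with $c=0$. Multiplying $-\Delta u=he^{u}$ by $e^{-u}$ and integrating, the Green formula \eqref{Green} gives
\begin{align*}
\int_{V}h\dif\mu=-\int_{V}e^{-u}\Delta u\dif\mu=\dfrac12\sum_{x,y\in V}\omega_{xy}\left(u(x)-u(y)\right)\left(e^{-u(x)}-e^{-u(y)}\right)\leq0,
\end{align*}
the last inequality holding because $t\mapsto e^{-t}$ is decreasing, so each summand is nonpositive. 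The step I expect to require the most care is upgrading this to a strict inequality. If equality held, then every summand with $\omega_{xy}>0$ would vanish, forcing $u(x)=u(y)$ for all adjacent $x,y$; by connectedness of $G$ this makes $u$ constant, whence $-\Delta u\equiv0$ and therefore $he^{u}\equiv0$, i.e. $h\equiv0$, contradicting $h\not\equiv0$. Thus $\int_{V}h\dif\mu<0$, which completes the necessity direction and the proof.
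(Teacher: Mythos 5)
Your proposal is correct and follows essentially the same route as the paper: sufficiency via \autoref{thm:flat} (i.e. $d_{h,0}=-1$) together with Kronecker existence, and necessity by integrating the equation to get $\int_{V}he^{u}\dif\mu=0$ (forcing a sign change) and then testing against $e^{-u}$ via \eqref{Green}, with the strict inequality $\int_{V}h\dif\mu<0$ obtained from connectedness exactly as in the paper's equality analysis. No gaps; your treatment of the equality case matches the paper's argument.
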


\begin{proof}
If \eqref{eq:KW} has a soluiton $u$, then since $\int_{V} he^{u}\dif \mu=0$, we know that $h$ must change sign. Moreover, similar to \eqref{add-use-Green}, we have
\begin{align}
\int_{V} h\dif \mu=\frac{1}{2}\sum_{x,y\in V}\omega_{x,y}(u(x)-u(y))\left(e^{-u(x)}-e^{-u(y)} \right)\leq 0,
\end{align}
and the equality holds if and only if $u(x)=u(y)$ whenever $\omega_{x,y}>0$. Since $V$ is connected, this can happen only when $u$ is a constant. But then $h\equiv 0$, contradicting the assumption. Hence we have $\int_{V} h \dif \mu<0$. We remark that the proof here follows the lines of \cite[p.\  92]{GriLinYan16kazdan}.

Conversely, under the assumption that $\int_{V}h\dif\mu<0$ and $h$ changes sign, by \autoref{main:degree} we have $d_{h,0}=-1$. As a consequence,  \eqref{eq:KW} has at least one solution.
\end{proof}

In the rest of this section, we consider the negative case. First, we have the following

\begin{lem}\label{lem:add-negative}
If $c<0$, then \eqref{eq:KW} has a solution if and only if there is a super-solution to \eqref{eq:KW}.
\end{lem}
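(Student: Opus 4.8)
The plan is to prove \autoref{lem:add-negative} via the method of sub- and super-solutions (\autoref{lem:sub-super}). One direction is trivial: if \eqref{eq:KW} has a solution $u$, then $u$ itself is a super-solution, since the defining inequality $\Delta u + he^u - c \leq 0$ holds with equality. So the content is the converse: given a super-solution, construct an actual solution. The strategy is to find a sub-solution lying below the given super-solution and then invoke \autoref{lem:sub-super}, whose hypotheses require an ordered pair $\phi \leq \psi$ of a sub- and super-solution.

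Concretely, suppose $\psi$ is a super-solution, so $\Delta\psi + h e^{\psi} - c \leq 0$ pointwise. I would look for a sub-solution of the form $\phi = -M$, a large negative constant, lying below $\psi$. For a constant $\phi \equiv -M$ we have $\Delta\phi = 0$, so the sub-solution condition $\Delta\phi + h e^{\phi} - c \geq 0$ becomes $h e^{-M} - c \geq 0$ at every vertex. Since $c < 0$ is a fixed negative number and $V$ is finite, as $M \to +\infty$ the term $h e^{-M}$ tends to $0$ uniformly, so $h e^{-M} - c \to -c > 0$; hence for $M$ large enough the inequality holds at every vertex and $\phi \equiv -M$ is genuinely a sub-solution. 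This is where the sign $c<0$ is essential. Enlarging $M$ further if necessary, I can also guarantee $-M \leq \min_V \psi$, so that $\phi \leq \psi$ holds on all of $V$. Thus I obtain an ordered sub-/super-solution pair.

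With the ordered pair $\phi \leq \psi$ in hand, I would apply \autoref{lem:sub-super} to the functional $J = J_{h,c}$ associated (via $f(x,s) = h(x)e^{s} - c$, with primitive $F(x,s) = h(x)e^s - cs$) to equation \eqref{eq:KW}. The set $\set{u \in V^{\mathbb{R}} : \phi \leq u \leq \psi}$ is nonempty (it contains $\phi$) and, because $V$ is finite, it is a compact subset of the finite-dimensional space $V^{\mathbb{R}}$; since $J$ is continuous, it attains its minimum on this set. That minimizer, by \autoref{lem:sub-super}, solves \eqref{eq:KW}, completing the converse direction.

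The main obstacle is essentially bookkeeping rather than a genuine difficulty: I must verify that the functional whose minimizer is produced by \autoref{lem:sub-super} is exactly $J_{h,c}$, i.e., that $F(x,u) = h(x)e^u - cu$ has $\partial F/\partial u = h(x)e^u - c = f(x,u)$, matching the right-hand side of \eqref{eq:KW}, and that the compactness of the order interval in the finite-dimensional setting guarantees the minimum is attained. Neither step is hard on a finite graph; the only subtlety worth flagging is that \autoref{lem:sub-super} is stated for $f(x,u(x))$ smooth in $u$, which holds here since $s \mapsto h(x)e^s - c$ is smooth. The sign condition $c<0$, used to produce a constant sub-solution, is the one place where the hypothesis of the lemma enters in an essential way.
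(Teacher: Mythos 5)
Your proposal is correct and takes essentially the same approach as the paper: the paper also observes that, since $c<0$, a large negative constant $-A$ is a sub-solution to \eqref{eq:KW}, and then concludes via \autoref{lem:sub-super}. Your write-up merely spells out the details the paper leaves implicit (the trivial direction, the ordering $-M\leq\min_V\psi$, and the attainment of the minimizer of $J_{h,c}$ on the compact order interval).
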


\begin{proof}
When $A$ is large enough,  the constant function $-A$ satisfies
\begin{align*}
    -\Delta\left(-A\right)+c-he^{-A}=c-he^{-A}<0.
\end{align*}
Thus $-A$ is a sub-solution to \eqref{eq:KW} since $c<0$. Applying the sub- and super-solutions method (\autoref{lem:sub-super}), we complete the proof.
\end{proof}

\begin{cor}\label{cor:-existence-negative}
Assume $c<0$ and $\int_{V}h\dif\mu<0$.
\begin{enumerate}[$(1)$]
\item If $h\leq0$, then \eqref{eq:KW} has a unique (strict  global  minimum) solution.
\item If $\max_{V}h>0$, then there exists a constant $c_h\in(-\infty,0)$ such that  \eqref{eq:KW} has at least two distinct  solutions for $c_h<c<0$, at least a (stable) solution for $c=c_h$ and no solution for $c<c_h$.
\end{enumerate}
\end{cor}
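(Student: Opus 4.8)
The plan is to prove \autoref{cor:-existence-negative} by combining the degree computation \autoref{main:degree}, the sub- and super-solution principle \autoref{lem:sub-super} (via \autoref{lem:add-negative}), and a monotonicity argument in $c$. For part $(1)$, when $h\leq 0$ (so $\max_V h\leq 0$), \autoref{main:degree} gives $d_{h,c}=1\neq 0$, so \eqref{eq:KW} has at least one solution by Kronecker existence. To get uniqueness and the variational characterization, I would show directly that any two solutions coincide: if $u,v$ both solve \eqref{eq:KW} with $h\leq 0$, then $-\Delta(u-v)=h(e^u-e^v)$, and testing against $u-v$ together with the Green formula \eqref{Green} and the fact that $h\leq 0$ forces $u\equiv v$. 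Strict global minimality follows because, with $h\leq 0$, the functional $J_{h,c}(u)=\int_V\bigl(\tfrac12\abs{\nabla u}^2+cu-he^u\bigr)\dif\mu$ is strictly convex and coercive on $V^{\mathbb R}$ (the term $-he^u=\abs{h}e^u\geq 0$ is convex), so it admits a unique critical point which is its strict global minimum.

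For part $(2)$, the key structural fact is monotonicity of solvability in $c$: if \eqref{eq:KW} is solvable for some $c_0<0$ and $c_0<c<0$, then it is solvable for $c$ as well. I would prove this by exhibiting a super-solution and invoking \autoref{lem:add-negative}. Concretely, if $u_0$ solves \eqref{eq:KW} with $c=c_0$, then since $c_0<c$ we have $-\Delta u_0-he^{u_0}=-c_0>-c$, so $u_0$ is a super-solution for the parameter $c$; hence \autoref{lem:add-negative} yields a solution for $c$. Therefore the solvability set is an interval $(c_h,0)$ or $[c_h,0)$ for some $c_h\in[-\infty,0)$. To see $c_h>-\infty$, I would note that the a priori bound of \autoref{main:a-priori} does not preclude nonexistence, but rather use that solvability fails for $c$ very negative: integrating \eqref{eq:KW} gives $\int_V he^u\dif\mu=c\int_V 1\dif\mu$, and combined with the estimate from the proof sketch referenced in the remark after \autoref{main:cor1}, namely $c_h\geq -C\max_V\abs{h}/\max_V h$, one obtains a finite lower threshold; alternatively this finiteness is exactly Ge's result \cite{Ge17kazdan}, which I may cite.

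The multiplicity claim for $c_h<c<0$ is where the degree theory does its essential work. For such $c$, \autoref{main:degree} gives $d_{h,c}=0$ (since $\max_V h>0$), yet a solution exists. If the solution were unique and nondegenerate, the degree would be $\pm 1\neq 0$, a contradiction; so there must be either a degenerate solution or at least two solutions. To upgrade this to genuinely two distinct solutions, I would produce one solution as a local minimum via the sub- and super-solution method: starting from the super-solution $u_0$ inherited from a slightly more negative parameter and the sub-solution $-A$ from \autoref{lem:add-negative}, minimizing $J_{h,c}$ over the order interval $\set{-A\leq u\leq u_0}$ gives a first solution $\underline u$ that is a local minimizer of $J_{h,c}$ in $L^\infty(V)$, hence has local degree $+1$. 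Since the total degree is $0$, the excision and additivity properties of the Brouwer degree force a second solution in $B_R\setminus\overline{B_\delta(\underline u)}$ carrying the complementary degree $-1$. For the borderline $c=c_h$, I would take a sequence $c_n\searrow c_h$ with solutions $u_n$; the a priori bound \autoref{main:a-priori} keeps $\set{u_n}$ uniformly bounded, so a subsequence converges to a solution $u_*$ at $c=c_h$, and a standard argument shows $u_*$ is stable (the limit of minimizers inherits nonnegativity of the second variation). The main obstacle will be the multiplicity step: extracting a genuinely distinct second solution from the degree identity $d_{h,c}=0$ requires care in showing the local minimizer $\underline u$ is isolated with local index $+1$ and that the minimizing interval argument produces a point in the interior away from the boundary obstacles, so I would devote the bulk of the rigor there.
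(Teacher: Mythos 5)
Your overall architecture is the same as the paper's: degree plus uniqueness for part (1); and for part (2), monotonicity of solvability in $c$ via super-solutions and \autoref{lem:add-negative}, a local-minimum solution from an order-interval minimization, the identity $d_{h,c}=0$ to force a second solution, and a compactness limit as $c\searrow c_h$. Part (1) as you present it is correct (the strict convexity/coercivity route is a legitimate alternative to the paper's use of stability, the strong maximum principle and $d_{h,c}=1$), and citing \cite{Ge17kazdan} for $c_h>-\infty$ is acceptable, since the paper reproduces Ge's argument only for completeness. However, part (2) contains a genuine gap that you do not even flag: nothing in your outline produces a \emph{first} solution for \emph{some} $c<0$ when $\max_V h>0$. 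You define $c_h$ as the infimum of the solvable set and assert $c_h\in[-\infty,0)$, but your monotonicity argument only shows that the solvable set is an interval reaching up to $0$ \emph{once it is nonempty}; it cannot show nonemptiness. Kronecker existence is useless here precisely because $d_{h,c}=0$ in this regime by \autoref{main:degree} --- that is the whole point of the negative sign-changing case. The paper supplies the seed solution by an explicit construction following \cite{GriLinYan16kazdan}: solve $-\Delta v=h-\frac{\int_V h\dif\mu}{\int_V 1\dif\mu}$ and verify that $\bar u=av+\ln a$ is a super-solution of \eqref{eq:KW} when $a>0$ and $\abs{c}$ are small; this is exactly where the hypothesis $\int_V h\dif\mu<0$ enters, and \autoref{lem:add-negative} then converts the super-solution into a solution. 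Without this step $c_h$ need not exist and everything downstream in your part (2) collapses.

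The second issue is the one you flag but leave open: ``local minimizer, hence local degree $+1$'' is not valid without isolatedness, so as written the passage from $d_{h,c}=0$ to a second solution is incomplete. The paper closes it by proving that the order-interval minimizer $u$ is \emph{strictly} stable (following \cite{LiuYan20multiple}): if the second variation of $J_{h,c}$ at $u$ vanished in some direction $\xi\not\equiv0$, then $-\Delta\xi=he^{u}\xi$, which forces $\xi$ to be nonconstant (since $h\not\equiv0$), while local minimality forces the third variation to vanish and the fourth to be nonnegative; but
\begin{align*}
-\int_V he^{u}\xi^4\dif\mu=\int_V\xi^3\Delta\xi\dif\mu=-\dfrac12\sum_{x,y\in V}\omega_{xy}\left(\xi(x)-\xi(y)\right)\left(\xi^3(x)-\xi^3(y)\right)<0,
\end{align*}
a contradiction. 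Strict stability means the Jacobian $-\Delta-\mathrm{diag}\left(he^{u}\right)$ is positive definite, so $u$ is a nondegenerate zero of $F_{h,c}$ with local index $+1$, and $d_{h,c}=0$ then yields a second solution. (A cheaper patch along your lines: if the minimizer is not isolated you already have infinitely many solutions, and an isolated local minimum of a gradient map in finite dimensions has index $+1$; but some such argument must actually appear.) Relatedly, your $\underline u$ is only known to be a local minimizer of $J_{h,c}$ after one shows it lies in the open order interval $-A<\underline u<u_0$, which uses \autoref{lem:maximum} together with the strict super-solution inequality $\Delta u_0+he^{u_0}-c=c_0-c<0$; the paper carries this out, whereas you only allude to it.
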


\begin{proof}

If $h\leq0$, we have $d_{h,c}=1$ for every $c<0$. Consequently,  the Kazdan-Warner equation \eqref{eq:KW} is solvable.  Since $h\leq0$, every solution to \eqref{eq:KW} is stable. Applying the strong maximum principle, one conclude that the solution to \eqref{eq:KW} is unique. The fact $d_{h,c}=1$ then implies that the unique solution to \eqref{eq:KW} is the strict global minimum of $J_{h,c}$.

From now on, we assume $\max_{V}h>0$.

 Applying the sub- and super-solution method, Grigor'yan, Lin and Yang \cite{GriLinYan16kazdan} proved that \eqref{eq:KW} is solvable  for every $ c\in[c_0,0)$ where $-c_0>0$ is small. In fact, arguing in a way similar to that of Grigor'yan, Lin and Yang \cite[p.\ 10]{GriLinYan16kazdan}, solve
\begin{align*}
-\Delta v=h-\dfrac{\int_{V}h\dif\mu}{\int_{V}1\dif\mu},\quad\int_{V}v\dif\mu=0.
\end{align*}
For constants $a>0, b=\ln a$, we compute
\begin{align*}
-\Delta\left(av+b\right)=&a\left(h-\dfrac{\int_{V}h\dif\mu}{\int_{V}1\dif\mu}\right)\\
=& he^{av+b}-ah\left(e^{av}-1\right)-a\dfrac{\int_{V}h\dif\mu}{\int_{V}1\dif\mu}\\
\geq&he^{av+b}-a\left(\abs{h}\abs{e^{av}-1}+\dfrac{\int_{V}h\dif\mu}{\int_{V}1\dif\mu}\right).
\end{align*}
Choose $a$ and $-c$ small  to obtain a super-solution $\bar u=av+b$. Thus there exists some $c_1<0$ such that \eqref{eq:KW} has a solution $u_{c_1}$ for $c=c_1$. For any $c\in[c_1,0)$, it is easy to see that
\begin{align*}
    \Delta u_{c_1}+he^{u_{c_1}}-c=c_1-c\leq 0.
\end{align*}
This means that $u_{c_1}$ is a super solution for \eqref{eq:KW}, and hence \eqref{eq:KW} has a solution by \autoref{lem:add-negative}. Let
\begin{align*}
    c_h=\inf\set{c\in \mathbb{R}: \text{\eqref{eq:KW} has a solution}}.
\end{align*}
Then $c_{h}\in[-\infty,0)$ and \eqref{eq:KW} has a  solution if $c\in(c_h,0)$ and no solution if $c<c_{h}$.

 Moreover, \eqref{eq:KW} has a strict local minimum solution for $c\in(c_h,0)$. In fact, following the idea of Liu and Yang \cite{LiuYan20multiple}, let $u_0\in V^{\mathbb{R}}$ satisfy
\begin{align*}
-\Delta u_0=he^{u_0}-c_0>he^{u_0}-c,
\end{align*}
 where $c_0\in(c_h,0)$.
Choose $A>0$ large such that $u_0>-A$ and
\begin{align*}
-\Delta\left(-A\right)<he^{-A}-c.
\end{align*}
Choose $u\in V^{\mathbb{R}}$ such that $-A\leq u\leq u_0$ and
\begin{align*}
J_{h,c}(u)=\min_{-A\leq v\leq u_0}J_{h,c}(v).
\end{align*}
Applying \autoref{lem:maximum}, one can prove that $-A<u<u_0$ and conclude that $u$ is a local minimum  of $J_{h,c}$ (cf. \cite{Ge17kazdan}). Moreover, $u$ is a stict local minimum. In fact, following the lines of \cite[p. 10-11]{LiuYan20multiple}, if there exists $0\not\equiv\xi\in V^{\mathbb{R}}$ such that $\frac{\dif^2}{\dif t^2}\left.J_{h,c}\left(u+t\xi\right)\right\rvert_{t=0}=0$, then
\begin{align*}
-\Delta\xi=he^{u}\xi.
\end{align*}
This implies that $\xi$ is not a constant function since $h\not\equiv0$.
Since $u$ is  a local minimum of $J_{h,c}$, analyzing the Taylor expansion of $J_{h,c}(u+t\xi)$ at the point $t=0$, we deduce that
\begin{align*}
\frac{\dif^3}{\dif t^3}\left.J_{h,c}\left(u+t\xi\right)\right\rvert_{t=0}=0,\quad \frac{\dif^4}{\dif t^4}\left.J_{h,c}\left(u+t\xi\right)\right\rvert_{t=0}\geq0.
\end{align*}
However,
\begin{align*}
\frac{\dif^4}{\dif t^4}\left.J_{h,c}\left(u+t\xi\right)\right\rvert_{t=0}=&-\int_{V}he^{u}\xi^4\dif\mu\\
=&\int_{V}\xi^3\Delta \xi\dif\mu\\
=&-\dfrac12\sum_{x,y}\omega_{xy}\left(\xi(x)-\xi(y)\right)\left(\xi^3(x)-\xi^3(y)\right)\\
<&0
\end{align*}
which is a contradiction. In other words, $u$ is strictly stable which implies that $u$ is a strict local minimum.

 If $h\leq0$ and $\min_{V}h<0$, then we conclude that $c_{h}=-\infty$ since \eqref{eq:KW} is solvable for every $c<0$.  If $c_{h}=-\infty$, then Ge \cite{Ge17kazdan} proved that $h\leq0$. In fact, if $\max_{V}h>0$, then
 \begin{align}\label{eq:c_h}
     c_h\geq-\dfrac{C\norm{\Delta h}}{\max_{V}h^+}.
 \end{align}
Following the lines of Ge \cite{Ge17kazdan}, assume $u_{c},\xi_c$ satisfies
 \begin{align*}
     -\Delta u_c=he^{u_c}-c,\quad\left(\Delta+c\right)\xi_c=h.
 \end{align*}
 Notice that for every $x\in V$,
 \begin{equation}\label{add-strict-ineq}
     \begin{split}
         -e^{-u_c(x)}\Delta u_c(x)=&\dfrac{1}{\mu_{x}}\sum_{y\in V}\omega_{xy}\left(u_c(x)-u_c(y)\right)e^{-u_c(x)}\\
     \leq&\dfrac{1}{\mu_{x}}\sum_{y\in V}\omega_{xy}\left(e^{-u_c(y)}-e^{-u_c(x)}\right) \\
     =&\Delta e^{-u_c}(x).
     \end{split}
 \end{equation}
 Here we used the inequality $e^t-1\geq t$ ($\forall t\in \mathbb{R}$) wherein the equality holds if and only if $t=0$.  We have
 \begin{align*}
     \left(\Delta+c\right) e^{-u_c}\geq-e^{-u_c}\Delta u_c+ce^{-u_c}=h=\left(\Delta+c\right)\xi_c,
 \end{align*}
 and the strict inequality holds at some point since the inequality in \eqref{add-strict-ineq} cannot always be equality as $u_c$ is not a constant function. Let $g=e^{-u_c}-\xi_c$. Then
 \begin{align}\label{add-Delta-ineq}
     \Delta g\geq -cg.
 \end{align} If $g$ is a constant, then we immediately deduce that $g<0$. If $g$ is not a constant, by \autoref{lem:maximum} we may choose some $x_1\in V$ such that
 \begin{align*}
      g(x_1)=\max_V g, \quad \Delta g(x_1)<0.
 \end{align*}
 This together with \eqref{add-Delta-ineq} imply $g<0$. In other words,  we have proved that
 \begin{align*}
     \xi_{c}>e^{-u_c}.
 \end{align*}
 Hence when $-c$ is large enough,
 \begin{align*}
     0>c\xi_{c}=\left(1+c^{-1}\Delta\right)^{-1}h=h-c^{-1}\Delta h+O\left(c^{-2}\norm{\Delta h}\right).
 \end{align*}
 We obtain the desired estimate \eqref{eq:c_h}. As a consequence, if  $c_h=-\infty$, then $\max_{V}h\leq0$.

If $c_{h}>-\infty$, then $\max_{V}h>0$ and $d_{h,c}=0$. We have already proved that there exists a strict local minimum solution for every $c_h<c<0$. Hence, there must be another solution for $c_h<c<0$.

If $c_{h}>-\infty$, then we want to prove that there exists a stable solution for $c=c_{h}$. Let $u_{c}$ be a strict local minimum solution for each $c_h<c<0$.  According to \autoref{thm:alternative}, we know that $u_{c}$ is uniformly  bounded. Thus, letting $c\searrow c_{h}$, after passing to a subsequence, we obtain a solution to \eqref{eq:KW} for $c=c_{h}$. Since $u_c$ is stable, we conclude that the limit is also stable.

Up to now, we obtain a stable solution for each $c\in[c_h,0)$. Moreover,  \eqref{eq:KW} has a strict local minimum solution for every $c\in(c_h,0)$. Since the Brouwer degree $d_{h,c}=0$ under the assumption $\max_{V}h>0$, we conclude that \eqref{eq:KW} has at least two distinct solutions for every $c\in(c_h,0)$.
\end{proof}

Combining Corollaries \ref{cor:positive}, \ref{cor:flat} and \ref{cor:-existence-negative}, we complete the proof of \autoref{main:cor1}.

Finally, we provide a new proof of \autoref{main:cor2} different from that in \cite{LiuYan20multiple}.

\begin{proof}[Proof of \autoref{main:cor2}]Without loss of generality, assume $\kappa\equiv-1$. Recall the condition $\min_V K<\max_V K=0$ for \eqref{eq:K-lambda}. Since $K_{\lambda}\leq0$ for $\lambda\leq0$, according to \autoref{main:cor1}, we conclude that  \eqref{eq:K-lambda} has only one (strict global minimum) solution  when $\lambda\leq0$. In particular, when $\lambda=0$, there is a strict global minimum of $J_{K,\kappa}$.

Let $\psi$ be the unique solution to
\begin{align*}
    -\Delta\psi=Ke^{\psi}-\kappa+1.
\end{align*}
Then
\begin{align*}
   -\Delta\psi+\kappa-K_{\lambda}e^{\psi}=1-\lambda e^{\psi}.
\end{align*}
Thus for small $\lambda$, we obtain a super-solution $\psi$ to \eqref{eq:K-lambda}. Applying the sub- and super-solutions method, we conclude that \eqref{eq:K-lambda} is solvable for small $\lambda$. Define
\begin{align*}
\lambda^*=\sup\set{\lambda\in\mathbb{R}: \text{\eqref{eq:K-lambda} has a solution}}.
\end{align*}
If \eqref{eq:K-lambda} has a solution, then
\begin{align*}
\int_V K_\lambda e^u\dif \mu=\int_V \kappa \dif \mu<0.
\end{align*}
This implies $\min_V K_\lambda =\min_V K+\lambda<0$, i.e., $\lambda<-\min_V K$. Hence we have $\lambda^*\leq-\min_{V}K$.

Applying the sub- and super-solutions principle, if \eqref{eq:K-lambda} has a solution when $\lambda=\lambda_0$, then  \eqref{eq:K-lambda} has a solution for every $\lambda<\lambda_0$.
 One can check that there exists a strict local minimum $u_{\lambda}$ of $J_{K_\lambda,\kappa}$ for $\lambda<\lambda^*$. Since the Brouwer degree $d_{K_{\lambda},\kappa}=0$ for $0<\lambda<\lambda^*$, we conclude that there exists another solution to \eqref{eq:K-lambda}.  By definition, \eqref{eq:K-lambda} has no solution for any $\lambda>\lambda^*$.

Consider the sequence $\set{u_{\lambda}}_{0<\lambda<\lambda^*}$. We prove that $u_{\lambda}$ is uniformly bounded to complete the proof. For otherwise, according to \autoref{thm:alternative}, since $\int_{V}\kappa\dif\mu<0$, we may assume $\max_{V}u_{\lambda}$ converges to $+\infty$, and  $u_{\lambda}$ is uniformly bounded from below in $V$,  and $u_{\lambda}$ is uniformly bounded in $\set{x\in V: K_{\lambda^*}(x)>0}\neq\emptyset$. By definition $K_{\lambda}=K+\lambda$. Thus, for $\lambda^*-\lambda$ small, we conclude that $u_{\lambda}$ is uniformly bounded in $\set{x\in V:K_{\lambda}(x)>0}$. Therefore,
\begin{align*}
    \int_{V}\kappa\dif\mu=&\int_{V}K_{\lambda}e^{u_{\lambda}}\dif\mu\\
    =&\int_{\set{x\in V: K_{\lambda}(x)>0}}K_{\lambda}e^{u_{\lambda}}\dif\mu+\int_{\set{x\in V: K_{\lambda}(x)\leq0}}K_{\lambda}e^{u_{\lambda}}\dif\mu\\
    =&\int_{\set{x\in V: K_{\lambda^*}(x)>0}}K_{\lambda}e^{u_{\lambda}}\dif\mu+\int_{\set{x\in V: K_{\lambda}(x)\leq0}}K_{\lambda}e^{u_{\lambda}}\dif\mu\\
    \leq&C-\int_{V}K_{\lambda}^-e^{u_{\lambda}}\dif\mu.
\end{align*}
This implies $\int_V K_\lambda^{-}e^{u_\lambda}\dif \mu\leq C$. The second line of the above equation also implies
\begin{align*}
  \int_V K_\lambda^{+}e^{u_\lambda}\dif \mu=\int_V\kappa \dif \mu+\int_V K_\lambda^{-}\dif \mu.
\end{align*}
Hence
\begin{align*}
    \int_{V}K_{\lambda}^+e^{u_{\lambda}}\dif\mu<\int_{V}K_{\lambda}^-e^{u_{\lambda}}\dif\mu\leq C.
\end{align*}
Thus
\begin{align*}
    \norm{\Delta u_{\lambda}}_{L^1\left(V\right)}\leq C.
\end{align*}
We obtain
\begin{align*}
    \max_{V}u_{\lambda}\leq\min_{V}u_{\lambda}+C\leq C
\end{align*}
which is a contradiction.
\end{proof}


\end{document}